\def\rit{\hbox{\it I\hskip -2pt  R}}
\def\A{{\bf A}}
\def\R{{\bf R}}
\def\bX{{\bf X}}
\def\bY{{\bf Y}}
\def\bx{{\bf x}}
\def\by{{\bf y}}
\newtheorem{theorem}{Theorem}
\newtheorem{proposition}[theorem]{Proposition}
\newtheorem{lem}[theorem]{Lemma}
\newtheorem{corollary}[theorem]{Corollary}
\newtheorem{defi}[theorem]{Definition}
\newtheorem{rem}[theorem]{Remark}
\newtheorem{example}[theorem]{Example}
\DeclareMathOperator{\med}{med}
\begin{document}

\title[ZSSG:LOT]{Zero-sum stochastic games: limit optimal trajectories}
\author{
Sylvain Sorin}

\address{\textbf{Sylvain Sorin},
Sorbonne Universit\'e, UPMC  Paris 06, Institut de Math\'ematiques de Jussieu-Paris Rive Gauche, UMR 7586, CNRS,  F-75005, Paris, France }

\email{sylvain.sorin@imj-prg.fr\newline
https://webusers.imj-prg.fr/sylvain.sorin}%

\thanks{Some of the results  of this paper were presented  in ``Atelier Franco-Chilien: Dynamiques, optimisation et apprentissage" Valparaiso, November 2010 and a  preliminary  version of this paper was given  at the Game Theory Conference in Stony Brook,  July 2012. This research  was supported by grant  PGMO
0294-01 (France)}

\author{Guillaume Vigeral } 

\address{\textbf{Guillaume Vigeral (corresponding author)}\\
Universit\'e Paris-Dauphine, PSL Research University, CNRS, CEREMADE, Place du Mar{\'e}chal De Lattre de Tassigny. 75775 Paris cedex 16, France}

\email{
vigeral@ceremade.dauphine.fr\newline
http://www.ceremade.dauphine.fr/~vigeral/indexenglish.html
}

\date{\today}

%

\bibliographystyle{apalike}

\begin{abstract}
We consider zero sum  stochastic games. For every discount factor $\lambda$,  a time normalization allows to represent  the game as being  played  on  the interval $[0,1]$. We introduce  the trajectories  of    cumulated  expected payoff and  of cumulated occupation measure  up to  time $t\in[0,1]$, under $\varepsilon$-optimal strategies. A limit   optimal trajectory  is defined as an accumulation point as the discount factor tends to 0. We study  existence, uniqueness and characterization of these limit optimal trajectories for absorbing games.

\end{abstract}

\maketitle


\section{Introduction}
The analysis of  two person zero sum  repeated games in discrete time may be performed along two lines:\\
1) {\it asymptotic approach}: to each probability  distribution $\theta$ on the set of  stages ($m= 1, 2, ...$)   one associates  the game  $G_\theta$ where the evaluation of the stream of stage payoffs $\{ g_m\} $ is $\sum_{m=1}^{+\infty} \theta_m g_m$,   and  one denotes  its  value by $v_{\theta} $. Given a  preordered family
$\{ \Theta, \succ\}$ of probability  distributions, one studies whether $v_{\theta} $ converges as $\theta \in \Theta$ ``goes to $\infty$''  according to $\succ$. Typical exemples correspond to $n$-stage games ($ \theta_m = \frac{1}{n}I_{m\leq n}, n \rightarrow  \infty$),
 $\lambda$-discounted games ($\theta_m = \lambda (1- \lambda) ^{m-1},  \lambda  \rightarrow  0$), or  more generally decreasing evaluations  ($\theta_m \geq \theta_{m+1}$)  with $\theta_1   \rightarrow  0$. The game has an asymptotic value  $v^*$ when these limits exist and coincide.\\
 2) {\it uniform approach}: for  each strategy of player 1, one evaluates the amount that can be obtained against any strategy of the opponent in any sufficiently long game for $\{ \Theta, \succ\}$. This  allows to define a $minmax$ and the game  has a uniform value  $v_\infty$ when $minmax= maxmin$.
 
 The second approach is stronger than the first one (the existence of $v_\infty$ implies the existence of $v^*$ and their equality), but there are games with asymptotic value and no uniform value (incomplete information on both sides, Aumann and Maschler \cite{AumannMaschler}, Mertens and Zamir \cite{MZ}; stochastic games with signals on the moves).
 The first approach deals only with families of values  while the second explicitly consider strategies. The main difference is that in the first case the ($\varepsilon$)-optimal strategies of  the players may depend on  the evaluation  represented by  $\theta$.

 We focus here on a class of games where this dependence has a smooth representation.
 Basically in addition to the asymptotic properties of the value one studies  the asymptotic behavior  along the play induced  by ($\varepsilon$)-optimal strategies.\\
 The first step is to normalize the duration  of the game using the evaluation $ \theta$. We  consider each game $G_\theta$ as being played on $[0,1]$, stage $n$ lasting from time $t_{n-1}= \sum_{m<n} \theta_m$ 
 to time $t_{n}= t_{n-1} + \theta_n$ (with $t_0= 0$). \\
 Note that here time $t$ corresponds to the fraction $t$ of the total duration of the game, as evaluated trough $\theta$. 
  In particular given   ($\varepsilon$)-optimal strategies in $G_\theta$   the stream of  expected stage payoffs generate a bounded measurable trajectory on  $[0, 1]$ and one will consider its asymptotic behavior.
  
  The next section introduces the basic definitions and concepts  that allow to describe our results.
  The main proofs are in Section 3. Further examples are in Sections 4 and 5. 
 
%
%
%
%
%
 
 To end this  quick overview let us recall that there are games which do not have an asymptotic value: stochastic games with compact action spaces, Vigeral \cite{V4}; finite stochastic games with signals on the state, Ziliotto \cite{Zi1}; or more generally Sorin and Vigeral, \cite{SV2}.
 
\section{Limit  optimal trajectories}
Let $\Gamma$ be a two-person zero-sum stochastic game with state space $\Omega$, action spaces $I$  and $J$,  stage payoff $g$ and transition $\rho$ from $\Omega \times I \times J$ to $\rit$ (resp. $\Delta(\Omega)$). We assume that $\Omega$ is finite, $I$ and $J$ are compact metric, $g$ and $\rho$ continuous. We keep the same notations for the multilinear  extensions to  $X=\Delta(I)$ and $Y=\Delta(J)$, where as usual $\Delta(A)$ denotes the probabilities  on $A$.

For any pair of stationary strategies $(\bx,\by) \in X^\Omega\times Y^\Omega = \bX \times \bY $, any state $\omega\in\Omega$ and any stage $n$, denote by $c^{\omega,\bx,\by}_n$ the expected payoff at stage $n$ under these stationary strategies, given the initial state  $\omega$, and by $q^{\omega,\bx,\by}_n\in\Delta(\Omega)$ the  corresponding distribution  of the state at stage $n$. Hence $c^{\omega,\bx,\by}_n = \langle q^{\omega,\bx,\by}_n,  \bar g(\bx, \by) \rangle  $ where $\bar g(\bx, \by)$ stands for the vector payoff with component in state $\zeta \in \Omega$  given by  $ g({\zeta}; \bx(\zeta), \by(\zeta)) $.
\begin{defi}$ $ \\
For any $(\bx,\by) \in X^\Omega\times Y^\Omega$, any discount factor $\lambda \in (0,1]$, and any starting state $\omega$, define the function $l^{\omega,\bx,\by}_\lambda:[0,1]\rightarrow \mathds{R}$ by
\[
l^{\omega,\bx,\by}_\lambda (t_n)=\lambda\sum_{i=1}^n (1-\lambda)^{i-1} c^{\omega,\bx,\by}_i
\]
 for $t_n= \lambda\sum_{i=1}^n (1-\lambda)^{i-1}$ and a linear interpolation between these dates  $\{ t_n\}$.
 \end{defi}
Thus $l^{\omega,\bx,\by}_\lambda(t_n)$ corresponds to the expectation of the  accumulated payoff for the $n$ first stages in the discounted game,  or up to time $t_n$ and  
 $l^{\omega,\bx,\by}_\lambda(t)$ to  the same at  the fraction $t$ of the game, both  under $\bx$ and $\by$ in the $\lambda$-discounted game starting from $\omega$.

Let $M(\Omega)$ denote the set of positive measures on $\Omega$. We introduce  similarly the expected accumulated occupation measure at  time $t$ under $\bx$ and $\by$ in the $\lambda$-discounted game starting from $\omega$ as follows:
\begin{defi} \label{dQ}$ $ \\
For any $(\bx,\by) \in X^\Omega\times Y^\Omega$, any discount factor $\lambda$, and any starting state $\omega$, define the function $Q^{\omega,\bx,\by}_\lambda:[0,1]\rightarrow M(\Omega)$ by:
\[
Q^{\omega,\bx,\by}_\lambda(t_n)=\lambda\sum_{i=1}^{n} (1-\lambda)^{i-1} q^{\omega,\bx,\by}_i
\]
and  by a linear interpolation between these dates $\{t_n\}$.
\end{defi}
Note that  for any $t\in [0,1]$, ${Q^{\omega,\bx,\by}_\lambda\left(t\right)} \in  t \,  \Delta(\Omega)$.

Denote by $l^{\bx,\by}_\lambda$ and $Q^{\bx,\by}_\lambda$ the $\Omega$-vectors of functions $l^{\omega,\bx,\by}_\lambda(\cdot)$ and $Q^{\omega,\bx,\by}_\lambda(\cdot)$ respectively.

Limit trajectories for the payoff and occupation measures will be defined as  accumulation points of  $l^{\bx_\lambda,\by_\lambda}_\lambda$ and $Q^{\bx_\lambda,\by_\lambda}_\lambda$ under $\lambda-$dependent   $\varepsilon$-optimal strategies $\bx_\lambda$ and $\by_\lambda$ as $\lambda$ tends to 0. 
\\
More precisely, denote by $\bX^\varepsilon_\lambda$ (resp. $\bY^\varepsilon_\lambda$) the set of $\varepsilon$-optimal stationary strategies in  the $\lambda$-discounted game $\Gamma_\lambda$ (with value $v_\lambda$)  for Player 1 (resp. for Player 2). \\
Then we introduce:
\begin{defi}
$l=(l^\omega :[0,1]\rightarrow \mathds{R})_{\omega\in \Omega}$ is a limit   optimal trajectory  for the  expected accumulated payoff ($LOTP$)  if :
\[
\forall \varepsilon>0,\ \exists \lambda_0>0,\ \forall \lambda<\lambda_0,\ \exists \bx_\lambda \in X^\varepsilon_\lambda,\ \exists \by_\lambda \in Y^\varepsilon_\lambda,\ \forall \omega\in\Omega,\ \forall t\in[0,1],\ \left|l^\omega(t)-l^{\omega,\bx_\lambda, \by_\lambda}_\lambda(t)\right|\leq \varepsilon.
\]

$Q=(Q^\omega: [0,1]\rightarrow M(\Omega))_{\omega\in \Omega}$ is a limit optimal trajectory  for the  expected accumulated occupation measure ($LOTM$) if :
\[
\forall \varepsilon>0,\ \exists \lambda_0>0,\ \forall \lambda<\lambda_0,\ \exists \bx_\lambda \in X^\varepsilon_\lambda,\ \exists \by_\lambda \in Y^\varepsilon_\lambda,\ \forall \omega\in\Omega,\ \forall t\in[0,1],\ \left\|Q^\omega(t)-Q^{\omega, \bx_\lambda,\by_\lambda}_\lambda(t)\right\|\leq \varepsilon.
\]
\end{defi}

Alternate weaker and stronger definitions are as follows: in both cases, if ``$\forall \lambda<\lambda_0$" is replaced by ``for some $\lambda_n$ going to 0", we will speak of a weak $LOT$. If  ``$\exists \bx_\lambda \in \bX^\varepsilon_\lambda,\ \exists \by_\lambda \in \bY^\varepsilon_\lambda$" is replaced by `$\exists \varepsilon'<\varepsilon,\ \forall \bx_\lambda \in \bX^{\varepsilon'}_\lambda,\ \forall  \by_\lambda \in \bY^{\varepsilon'}_\lambda$", we will speak of a strong $LOT$.

\begin{rem}
\end{rem}
\noindent - a weak  $LOT$  always exists by standard arguments of equicontinuity.\\
- if a  $LOTP$  $l$ exists, $v_\lambda$ converges  to $l(1)$.\\
- if a strong $LOT$  exists, it is unique\\
- no strong $LOTM$  exists in general  (just consider a game where payoff is always 0).\\
- if the game has a uniform value and both players use $\varepsilon$-optimal strategies the average  expected payoff is essentially constant along the play.

%
%

A first approach to this topic  concerns 
one player games (or games where one  player controls the transitions), where there is no  finiteness assumption on $\Omega$.  Assume that $v_\lambda$ converges uniformly then there exists a strong LOTP and it is linear  w.r.t. $t$, which means that the expected payoff is constant along the trajectory  (Sorin, Venel and Vigeral  \cite{SVV}).\\
The same article provides an example of a  two player game with finite action and countable state spaces, where LOTP is not unique.

 The main contributions of the current paper are:\\
 For  absorbing games, existence of a  linear LOTP, and existence of a ``geometric"  algebraic LOTM\\
  For   finite absorbing games, existence of a   strong LOTP.\\
 An exemple of a finite game   where LOTM  is not semialgebraic.\\
An example of  compact absorbing game  with  non uniqueness of  LOTP.

Let us mention recent results of Oliu-Barton and Ziliotto  \cite{mobbz}
establishing the existence of linear  strong LOPT for finite  stochastic games and optimal strategies: the class of games is larger and they allow for any kind of optimal strategies. Our results deal with compact action spaces  and  $\varepsilon$-optimal strategies.
 
As a final comment, let us underline  the fact that 
the previous concepts and definitions can be extended to any repeated game,  for any evaluation  and any type of strategies.

 {\section{Absorbing games}

An  absorbing game  $\Gamma$  is defined by two sets of actions $I$ and $J$, 
two stage payoff  functions  $g$, $g^*$ from $%
I\times J$ to $\left[ -1,1\right] $ and a  probability of absorption  $%
p^*$ from $I\times J$ to $\left[ 0,1\right] .$\\
$I$ and $J$ are compact metric sets,  $g, g^*$ and $p^*$ are (jointly) continuous.

The repeated game is played in discrete time as
follows. At stage $t=1,2,...$ (if  absorption has not yet occurred) player 1
chooses $i_{t}\in I$ and, simultaneously, player 2 chooses $j_{t}\in J$:\\
(i) the payoff at stage $t$ is $g\left( i_{t},j_{t}\right) $;\\
(ii) with probability $p^*\left( i_{t},j_{t}\right)$ absorption  is reached
and the payoff in all future stages $s > t$ is $g^*\left( i_{t},j_{t}\right) $;\\
(iii) with probability $p\left( i_{t},j_{t}\right) :=1-p^* \left( i_{t},j_{t}\right)$ the situation is
repeated at stage $t+1$.

Recall that the asymptotic analysis for these games is due to Kohlberg \cite{Kohlberg74} in the case where $I$ and $J$ are finite and Rosenberg and Sorin \cite{RosenbergSorin}
in the current framework. In either case the  value $v_\lambda$  of the discounted game  $\Gamma_\lambda$
 converges to some $v$ as $\lambda$ goes to 0. This does not require any assumption on the  information of the players. In case of full observation of the actions - or of the stage payoff, a uniform value exists, see Mertens and Neyman \cite{MeNe81} in the finite case and Mertens, Neyman and Rosenberg  \cite{MNR09} for compact actions.


Recall that 
 $X=\Delta(I)$ and $Y=\Delta(J)$ are  the sets of probabilities on $I$ and $J$. The 
 functions $g$, $p$ and $p^*$ are bilinearly extended to $X\times Y$.
Let \[
G^*(x,y) := p^*(x,y) {\overline g}^* (x,y) := \int_{I\times J} p^*(i,j) g^*(i,j) x(di) y(dj).
\] ${\overline g}^*(x,y)$ is thus the expected absorbing payoff conditionally to absorption (and is thus only defined for $p^*(x,y)\neq 0$).

\subsection{An auxiliary game} $ $ \\
Consider the two-person zero-sum game $\A$, defined for any
$(x,x',a)\in S=X^2\times\mathbb{R}^+$ and 
$(y,y',b)\in T= Y^2\times\mathbb{R}^+$, 
by the payoff function
\begin{equation}\label{defA}
A(x,x',a,y,y',b)=
\frac{g(x,y)+a\: G^*(x',y)+b \: G^*(x,y')}{1+a\: p^*(x',y)+b \: p^*(x,y')}.
\end{equation}

\subsubsection{General properties} $ $ \\
The following proposition extends to the compact case results  due to Laraki \cite{Laraki2010a} in the finite case (later simplified by  Cardaliaguet, Laraki and Sorin  \cite{CLS}).

\begin{proposition}  \label{LL}$  $ \\
1) The game $\A$ has a value, which is  $v =  \lim v_\lambda$. \\
More precisely
\[
v= \max_{x\in X}\sup_{(x',a)\in X\times \mathbb{R}^+} \inf_{(y,y',b)\in T} A(x,x',a,y,y',b)=\min_{y\in Y}\inf_{(y',b) \in Y\times \mathbb{R}^+} \inf_{(x,x',a)\in S} A(x,x',a,y,y',b).
\]

\noindent
2) Moreover, if
$(x,x',a)$ is $\varepsilon$-optimal in the game $\A$
 then for any
$\lambda$ small enough the stationary strategy
$\hat x_\lambda:=\frac{x+\lambda ax'}{1+\lambda a}$ is
$2\varepsilon$-optimal in $\Gamma_\lambda$.
\end{proposition}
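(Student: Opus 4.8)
\emph{Step~1: the basic estimate.} The plan is to reduce the whole proposition to one elementary estimate and then run it symmetrically for the two players. Write $\gamma_\lambda(x,y)=\frac{\lambda g(x,y)+(1-\lambda)G^*(x,y)}{\lambda+(1-\lambda)p^*(x,y)}$ for the $\lambda$-discounted payoff of the stationary pair $(x,y)$, and for $(x,x',a)\in S$ put $\hat x_\lambda=\frac{x+\lambda ax'}{1+\lambda a}\in X$. Expanding $\hat x_\lambda$ by bilinearity of $g$, $p^*$, $G^*$ and dividing the numerator and denominator of $\gamma_\lambda(\hat x_\lambda,y'')$ by $\lambda$, one sees that $\gamma_\lambda(\hat x_\lambda,y'')$ and $A(x,x',a,y'',y'',\tfrac{1-\lambda}{\lambda})$ are ratios with the same numerator (resp.\ denominator) up to an additive error of modulus $\le 2\lambda a$ (resp.\ in $[0,\lambda a]$), the common denominator being $\ge1$. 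Since $|g|\le1$ and $|G^*(x,y)|\le p^*(x,y)$ (so $|A|\le1$, the numerator of $A$ being dominated in modulus by its denominator), this yields
\[
\Bigl|\gamma_\lambda(\hat x_\lambda,y'')-A\bigl(x,x',a,y'',y'',\tfrac{1-\lambda}{\lambda}\bigr)\Bigr|\le 3\lambda a\qquad\text{for all }y''\in Y,
\]
together with the mirror bound $\bigl|\gamma_\lambda(x'',\hat y_\lambda)-A(x'',x'',\tfrac{1-\lambda}{\lambda},y,y',b)\bigr|\le 3\lambda b$ for $\hat y_\lambda=\frac{y+\lambda by'}{1+\lambda b}$ and all $x''\in X$.

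\emph{Step~2: one-sided asymptotics.} Put $W_1:=\sup_{S}\inf_{T}A$, $W_2:=\inf_{T}\sup_{S}A$. Against the stationary $\hat x_\lambda$ Player~2 has a stationary best reply, hence $v_\lambda\ge\inf_{y''\in Y}\gamma_\lambda(\hat x_\lambda,y'')$; choosing $(x,x',a)\in S$ with $\inf_{(y,y',b)\in T}A(x,x',a,y,y',b)\ge W_1-\varepsilon$, Step~1 gives $v_\lambda\ge W_1-\varepsilon-3\lambda a$, so $\liminf_{\lambda\to0}v_\lambda\ge W_1$; the mirror estimate, applied to a triple in $T$ nearly realizing $W_2$, gives $\limsup_{\lambda\to0}v_\lambda\le W_2$. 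With weak duality $W_1\le W_2$ this yields $W_1\le\liminf v_\lambda\le\limsup v_\lambda\le W_2$.

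\emph{Step~3: the reverse inequalities — the main obstacle.} It remains to prove $W_1\ge\lim v_\lambda$ and symmetrically $W_2\le\lim v_\lambda$; this is the hard part. The Shapley operator $(x,y)\mapsto\lambda g(x,y)+(1-\lambda)\bigl(G^*(x,y)+p(x,y)w\bigr)$ being bilinear and continuous with $X,Y$ compact convex, $\Gamma_\lambda$ has a value $v_\lambda$ and optimal stationary strategies; fix an optimal $x_\lambda$ for Player~1, so $\gamma_\lambda(x_\lambda,y'')\ge v_\lambda$ for all $y''$, and set $v:=\lim v_\lambda$. Along a subsequence with $x_\lambda\to z^*$ weak-$*$, the inequality $\gamma_\lambda(x_\lambda,y)\ge v_\lambda$ first gives $\overline{g}^*(z^*,y)\ge v$ whenever $p^*(z^*,y)>0$, i.e.\ $G^*(z^*,y')\ge v\,p^*(z^*,y')$ for all $y'$; an elementary manipulation of $A$ then shows $A(z^*,x',a,y,y',b)\ge\min\bigl(\tfrac{g(z^*,y)+aG^*(x',y)}{1+ap^*(x',y)},\,v\bigr)$, so it is enough to find a perturbation $(x',a)$ with $\tfrac{g(z^*,y)+aG^*(x',y)}{1+ap^*(x',y)}\ge v-\eta$ for every $y$. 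As in Laraki, $(x',a)$ is read off from the vanishing part of $x_\lambda$ (of mass $O(\lambda)$) renormalized by that mass: the optimality inequality forces that for every $y$ with $g(z^*,y)<v-\eta$ the limit perturbation direction absorbs against $y$ at a payoff above $v-\eta$, and compactness of $Y$ with joint continuity of $g,g^*,p^*$ then provides a single $a$. Hence $W_1\ge v$, and with Step~2, $W_1=W_2=v=\lim v_\lambda$; moreover $z^*$ realizes the outer supremum over $x$ (the $\max_x$ of the statement — the inner supremum over $(x',a)$ need not be attained, $a$ possibly being unbounded). So $\A$ has value $v=\lim v_\lambda$ with the announced formulas; convergence of $v_\lambda$ may be invoked from the asymptotic theory of absorbing games (Kohlberg \cite{Kohlberg74}; Rosenberg and Sorin \cite{RosenbergSorin}) or recovered from Step~2 once $W_1=W_2$. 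The real difficulty, and the only place where Laraki's finiteness hypothesis intervenes, is the extraction and uniform control of this perturbation: the support bookkeeping of the finite case must be carried out, in the compact setting, on probability measures over $I$ (weak-$*$ compactness of $X$, uniformity from compactness of $Y$), and that is the technical core.

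\emph{Step~4: part~2.} This is immediate from Step~1. Let $(x,x',a)$ be $\varepsilon$-optimal in $\A$, i.e.\ $\inf_{T}A(x,x',a,\cdot)\ge v-\varepsilon$, where $v$ is the value of $\A$ (equal to $\lim v_\lambda$ by part~1). For every $y''\in Y$, Step~1 gives $\gamma_\lambda(\hat x_\lambda,y'')\ge A(x,x',a,y'',y'',\tfrac{1-\lambda}{\lambda})-3\lambda a\ge v-\varepsilon-3\lambda a$, so the value of the Markov decision problem faced by Player~2 against $\hat x_\lambda$ in $\Gamma_\lambda$ is $\ge v-\varepsilon-3\lambda a$. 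Since $v_\lambda\to v$, for $\lambda$ small $v_\lambda\le v+\varepsilon/2$ and $3\lambda a\le\varepsilon/2$, whence that value is $\ge v_\lambda-2\varepsilon$: $\hat x_\lambda$ guarantees $v_\lambda-2\varepsilon$ and is $2\varepsilon$-optimal in $\Gamma_\lambda$.
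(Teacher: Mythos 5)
Your Steps 1, 2 and 4 are correct and essentially coincide with the paper's treatment of the ``guarantee'' direction and of part 2 (the comparison of $r_\lambda(\hat x_\lambda,y)$ with $A(x,x',a,y,y,\tfrac{1-\lambda}{\lambda})$ up to $O(\lambda a)$ is exactly the paper's computation). The problem is Step 3, which you yourself flag as ``the technical core'': you correctly reduce matters to exhibiting, for each $\eta>0$, a \emph{single} pair $(x',a)$ with $\frac{g(z^*,y)+aG^*(x',y)}{1+ap^*(x',y)}\ge v-\eta$ uniformly in $y\in Y$, but you then only describe a program for producing it (``read off from the vanishing part of $x_\lambda$ renormalized by its mass'', ``support bookkeeping \dots on probability measures over $I$'') without executing it. The uniformity in $y$ is precisely the difficulty: pointwise in $y$ the optimality of $x_\lambda$ gives a $y$-dependent perturbation, and ``compactness of $Y$ \dots provides a single $a$'' is an assertion, not an argument --- there is no obvious finite subcover or minimax step that merges the $y$-dependent directions into one. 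As written, the key inequality $\sup_S\inf_T A\ge\lim v_\lambda$ is not proved.

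The paper closes this gap with a short trick that avoids any decomposition of $x_\lambda$. Starting from $v_\lambda\le\frac{g(x_\lambda,y)+\frac{1-\lambda}{\lambda}G^*(x_\lambda,y)}{1+\frac{1-\lambda}{\lambda}p^*(x_\lambda,y)}$, one fixes a single $\overline\lambda$ in the subsequence with $v_{\overline\lambda}$ within $\varepsilon$ of $w$ and $g(x_{\overline\lambda},\cdot)$ uniformly within $\varepsilon$ of $g(\overline x,\cdot)$, and simply takes $x'=x_{\overline\lambda}$ and $a=\frac{1-\overline\lambda}{\overline\lambda}$: replacing $g(x_{\overline\lambda},y)$ by $g(\overline x,y)$ in the numerator costs at most $\varepsilon$ because the denominator is $\ge 1$, so $\frac{g(\overline x,y)+aG^*(x',y)}{1+ap^*(x',y)}\ge w-2\varepsilon$ for every $y$. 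Combined with your own limit inequality $G^*(\overline x,y')\ge w\,p^*(\overline x,y')$ and the mediant bound, this yields $\inf_T A(\overline x,x',a,\cdot)\ge w-2\varepsilon$ with $\overline x$ independent of $\varepsilon$ (which is what gives the $\max_{x}\sup_{(x',a)}$ form of the statement). Replace your perturbation-extraction program by this explicit choice and the rest of your write-up goes through.
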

\begin{proof} $ $\\
1) Consider  an accumulation point $w$ of the family $\{ v_\lambda \}$ and let  $\lambda_n \rightarrow 0$ such that $v_{\lambda_n}$ converges to $w$.\\
We will show   that
\begin{equation}\label{dps0}
w \leq  
\sup\limits_{(x,x',a)\in S}\inf\limits_{(y,y',b)\in T}
\frac{g(x,y)+a\: G^*(x',y)+b \: G^*(x,y')}{1+a\: p^*(x',y)+b \: p^*(x,y')}
\end{equation}
A dual argument  proves at the same time that the family $\{ v_\lambda \}$ converges and that the auxiliary game $\A$ has a value.\\
Let $r_{\lambda} (x,y)$ be the  payoff  in the game $\Gamma_\lambda$, induced by a pair of stationary strategies $(x,y) \in X \times Y$. It satisfies 
\begin{equation}
r_{\lambda} (x,y) =  \lambda g(x,y) + (1- \lambda) [  (1  - p^*(x,y)) r_{\lambda} (x,y) + G^* (x,y)] 
\end{equation}
hence
\begin{equation}\label{rlambda}
r_{\lambda} (x,y) = \frac{ \lambda g(x,y) + (1- \lambda) G^* (x,y)}{\lambda + (1  - \lambda) p^*(x,y)}.
\end{equation}
In particular for any $x_\lambda \in X $ optimal for Player 1  in $\Gamma_\lambda$, one obtains
\begin{equation}\label{m}
v_\lambda \leq  \frac{ \lambda g(x_\lambda, y) + (1- \lambda) G^* (x_\lambda, y)}{\lambda + (1  - \lambda) p^*(x_\lambda, y)}, \quad \forall y \in Y,
\end{equation}
that one can write
\begin{equation}\label{dps}
v_\lambda \leq  \frac{ g(x_\lambda, y) + \frac{ (1- \lambda)}  { \lambda } G^* ( x_\lambda, y)}{ 1 +  \frac{ (1- \lambda)}  { \lambda }  p^*(  x_\lambda,  y)}, \quad \forall y \in Y.
\end{equation}
Let $\overline x \in X$ be an accumulation point of $ \{x_{\lambda_n} \}$ and  given $\varepsilon >0$ let $\overline \lambda$ in the sequence $ \{ {\lambda_n} \}$  such that
$$
|g(\overline x, y) - g(x_{\overline \lambda},y) | \leq \varepsilon, \qquad \forall y \in  Y
$$
(we use the fact that $g$ is uniformly continuous on $X\times Y$) and
$$
|v_{\overline \lambda} -w| \leq \varepsilon.
$$
Then with $\overline a = \frac{ (1- \overline \lambda)  }{\overline  \lambda } $ and    $ \overline x' = x_{\overline \lambda}$,  (\ref{dps}) implies
\begin{equation}\label{dps1}
w - \varepsilon \leq  \frac{ g( \overline x, y) + \overline a G^*  (\overline x', y)}{ 1 + \overline a p^*  (\overline x', y)} + \varepsilon, \quad \forall y \in  Y.
\end{equation}
On the other hand,  going to the limit in (\ref{m}) leads to
\begin{equation}\label{dps2}
w \:  p^*(\overline x, y') \leq  G^*(\overline x , y' ), \qquad \forall y' \in Y.
\end{equation}
We multiply (\ref{dps1}) by the denominator $1 + \overline a  p^*(\overline x ', y)$ and we add to  (\ref{dps2}) multiplied by $b\in \R_+$ to obtain the property:\\
$ \forall \varepsilon > 0, \exists \  \overline {x}, \overline {x}'\in X$ and $\overline a\in \R_+ $ such that
\begin{equation}\label{dps4}
w \leq  \frac{g(\overline x,y)+ \overline a\: G^*(\overline x',y)+b \: G^*(\overline x,y')}{1+\overline a\: p^*(\overline x',y)+b \: p^*(\overline x,y')}+ 2 \varepsilon, \qquad \forall y, y' \in Y, b \in \R_+
\end{equation}
which implies (\ref{dps0}). Note moreover that $\overline x $  is independent of $\varepsilon$,  hence the result.\\

2) Let $(x, x', a) $ be $\varepsilon$-optimal in the game $\A$ and   $\hat x_\lambda:= \displaystyle {\frac{x+\lambda ax'}{1+\lambda a}}$.
Using \eqref{rlambda}
one obtains
$$
r_{\lambda} (\hat x_{\lambda},y) = \frac{ \lambda [g(x,y)  +  \lambda a  g (x',y)] + (1- \lambda) [G^* (x,y) +  \lambda a  G^*(x',y)]}{\lambda  (1 + \lambda a) + (1  - \lambda)( p^*(x,y) + \lambda a  p^*(x',y))}.
$$
Note that
$$
A (x, x', a, y, y, \frac{1- \lambda}{\lambda})= \frac{ {\lambda} g(x,y)+ {\lambda} a\: G^*(x',y)+ (1- {\lambda})  \: G^*(x,y)}{ {\lambda} + {\lambda} a\: p^*(x',y)+ (1- {\lambda})  \: p^*(x,y)}.
$$
Thus
$$
| r_{\lambda} (\hat x_{\lambda},y)  - A (x, x', a, y, y, \frac{1- \lambda}{\lambda}) | \leq 4C \lambda a
$$
where $C$ is a bound on the payoffs. Hence, for any $y \in  Y$
$$
v - r_{\lambda} (\hat x_{\lambda},y)  \leq \varepsilon +4C \lambda a \leq 2\varepsilon
$$
 for $\lambda$ small enough.
\end{proof}

%
%
%
%

$\A$ is an {\it auxiliary limit game} in the sense that:\\
i) There is a map  $ \phi$ from $S  \times (0, 1]$ to $X$ (that associates  to a strategy of player 1  in $\A$ and a discount factor a stationary strategy of player 1  in  $ \Gamma$).\\
ii) There is a map  $ \psi$ from $Y  \times (0, 1]$ to $T$ (that associates  to a  stationary strategy of player 2  in $\Gamma$ and a discount factor a stationary strategy of player 2  in  $ \A$).\\
iii)
$$
 r_\lambda(\phi(\lambda, s), y ) \geq A(s, \psi ( \lambda, y) )  - o(1),\quad  \forall s \in S, \forall y \in Y
$$
iv) A  dual property holds.\\
These properties imply: $\lim v_\lambda$ exists and equals $v(\A)$.

We then recover Corollary 3.2 in Sorin and Vigeral \cite{SV1}, with a new proof that will be useful in the sequel. 
\begin{corollary}\label{coroJota}
\begin{eqnarray}
 v&=&\min_{y\in Y} \max_{x\in X} \med\left(g(x,y); \sup_{x"|p^*(x",y)>0} \left\{\overline g^*(x",y)\right\}; \inf_{y"|p^*(x,y")>0}\left\{\overline g^*(x,y")\right\}\right)\label{eqrida3} \cr
 &=& \max_{x\in X} \min_{y\in Y}\med\left(g(x,y); \sup_{x"|p^*(x",y)>0} \left\{\overline g^*(x",y)\right\}; \inf_{y"|p^*(x,y")>0}\left\{\overline g^*(x,y")\right\}\right)
\end{eqnarray}
where $\med$ is the median of three numbers, and with the usual
convention that $\sup_{x"\in\emptyset}=-\infty$;
$\inf_{y"\in\emptyset}=+\infty$. Moreover 
 if $(x,x',a)$ (resp $(y,y',\varepsilon)$) is $\varepsilon$-optimal in  $\A$ then $x$ (resp. $y$) is $\varepsilon$-optimal in \eqref{eqrida3}.
\end{corollary}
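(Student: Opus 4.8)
The plan is to derive the median formula directly from the variational characterization of $v$ established in Proposition \ref{LL}(1), by carrying out the optimization over the auxiliary variables $(x',a)$ and $(y',b)$ explicitly. Starting from
\[
v=\max_{x\in X}\sup_{(x',a)}\inf_{(y,y',b)} A(x,x',a,y,y',b),
\]
I would first fix $x$ and $y$ and analyze the partial optimization over the absorbing components. For the term involving $b$ and $y'$: Player 2 wishes to drive the payoff down, so he will pick $y'$ to minimize $\overline g^*(x,y')$ over $\{y'\mid p^*(x,y')>0\}$ and then send $b\to+\infty$ if and only if that infimum lies below the current value of the fraction; otherwise he takes $b=0$. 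The effect is exactly to cap the payoff from above by $\inf_{y'\mid p^*(x,y')>0}\overline g^*(x,y')$, with the convention $\inf_\emptyset=+\infty$ ensuring no cap when $p^*(x,\cdot)\equiv 0$. Symmetrically, the $(x',a)$ optimization by Player 1 raises the payoff up to $\sup_{x'\mid p^*(x',y)>0}\overline g^*(x',y)$ when that quantity exceeds the running value. The base term when $a=b=0$ is simply $g(x,y)$. Combining a lower floor and an upper ceiling applied to $g(x,y)$ yields precisely $\med$ of the three quantities.

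The key computational lemma I would isolate is the following elementary fact about fractions: for fixed nonnegative reals, the map $a\mapsto \frac{\alpha+a\beta}{1+a\gamma}$ (with $\gamma>0$) is monotone, increasing toward $\beta/\gamma$ if $\beta/\gamma>\alpha$ and decreasing toward it otherwise; and when $\gamma=0$ it is affine in $a$, unbounded unless $\beta=0$. Applying this twice — once for the $a,x'$ block with $\alpha=g(x,y)$, $\beta=a\,G^*(x',y)/a=G^*(x',y)$... more carefully, after substituting $G^*=p^*\overline g^*$ the relevant limit as $a\to\infty$ with $y'=y$, $b=0$ is $\frac{G^*(x',y)}{p^*(x',y)}=\overline g^*(x',y)$ — and once for the $b,y'$ block, gives the median structure. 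One must check that the two blocks do not interfere: optimizing one does not change the optimal choice in the other, which holds because each limiting value $\overline g^*$ does not depend on the opposing auxiliary parameters. This is routine but needs the case distinctions (is the floor below the ceiling? does $g(x,y)$ fall between them?) to be handled, and the $\med$ notation absorbs all of them cleanly.

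For the minmax side, I would invoke the dual formula in Proposition \ref{LL}(1) and repeat the same argument with the roles reversed, giving the second equality; the two agreeing values confirm the game $\A$ has a value and so does the median game. Finally, for the statement on $\varepsilon$-optimal strategies: if $(x,x',a)$ is $\varepsilon$-optimal in $\A$, then by definition $\inf_{(y,y',b)}A(x,x',a,y,y',b)\ge v-\varepsilon$; but the inner infimum over $(y,y',b)$ is bounded above by what Player 2 can guarantee using only the $g(x,y)$ and $\inf_{y'}\overline g^*(x,y')$ options (taking $a$'s benefit away costs Player 1 nothing in the upper bound direction), so $x$ alone guarantees at least $v-\varepsilon$ in the median game \eqref{eqrida3}. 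The symmetric argument handles $y$. The main obstacle is bookkeeping the case analysis in the fraction optimization so that the convention $\sup_\emptyset=-\infty$, $\inf_\emptyset=+\infty$ is respected uniformly; once that lemma is stated carefully, the rest is substitution.
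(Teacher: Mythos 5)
Your elimination of one auxiliary block at a time is fine as far as it goes: for fixed $(x,x',a,y)$ the map $b\mapsto A$ is a ratio of affine functions, monotone towards $\overline g^*(x,y')$, so the infimum over $(y',b)$ is $\min\bigl(\frac{g(x,y)+aG^*(x',y)}{1+a p^*(x',y)},\, h^-(x)\bigr)$, and dually for the $(x',a)$ block with everything else fixed. The genuine gap is in the step ``combining a lower floor and an upper ceiling applied to $g(x,y)$ yields precisely $\med$'': in the formula of Proposition \ref{LL} the quantifiers are ordered, e.g. $\sup_{(x',a)}$ is taken \emph{before} $\inf_{(y,y',b)}$. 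After you eliminate $(y',b)$ pointwise you are left with $\sup_{(x',a)}\inf_{y}\min\bigl(\frac{g(x,y)+aG^*(x',y)}{1+a p^*(x',y)},h^-(x)\bigr)$, i.e. a genuine sup--inf between Player 1's auxiliary pair and Player 2's \emph{main} variable $y$. Identifying this with $\min_y \med\bigl(g(x,y);h^+(y);h^-(x)\bigr)$ is not bookkeeping: the quantity $h^+(y)=\sup_{x''\mid p^*(x'',y)>0}\overline g^*(x'',y)$ lets $x''$ depend on $y$, whereas in the sup--inf order $x'$ is chosen before $y$. Your remark that ``the two blocks do not interfere'' addresses the interaction between the two auxiliary blocks, not this order-of-quantifiers issue; closing it would require a minimax/interchange argument for the resulting ratio game (or an ad hoc inequality), which the proposal neither states nor proves. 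The same issue appears symmetrically on the dual side.

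The paper circumvents exactly this difficulty by arguing with strategies rather than by evaluating the nested optimizations: it fixes an $\varepsilon$-optimal triplet $(y,y'_\varepsilon,b_\varepsilon)$ of Player 2 in $\A$, with $y$ independent of $\varepsilon$ (this independence comes from the proof of Proposition \ref{LL} and is what makes the ``moreover'' clause work), tests it against Player 1's choices $a=+\infty$ (giving $v+\varepsilon\geq h^+(y)$) and $a=0$ (giving $v+\varepsilon\geq\min\{g(x,y),h^-(x)\}$ for every $x$), and then uses the elementary median inequality to get $v+\varepsilon\geq\med\bigl(g(x,y);h^+(y);h^-(x)\bigr)$ for all $x$; the dual inequality plus $\max_x\min_y\leq\min_y\max_x$ then sandwiches everything and yields both equalities and the $\varepsilon$-optimality of $x$ and $y$ in \eqref{eqrida3}. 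Your treatment of the ``moreover'' part is close in spirit to this (test the $\varepsilon$-optimal triplet against $b\to\infty$ and $b=0$ and use a median inequality), but the main body of your argument, as written, rests on an unjustified exchange of a supremum and an infimum and is therefore incomplete.
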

\begin{proof} $ $ \\
For any $\varepsilon>0$ fix a triplet $(y, y'_\varepsilon,b_\varepsilon) \in T$ of the second player  $\varepsilon$-optimal in $\A$, where we can assume that $y$ does not depend on $\varepsilon$ by the previous result.
 Then for any $x" \in X $ such that $p^*(x",y)>0$, one has 
\begin{equation}\label{eqv+epsilon}
v + \varepsilon \geq A(x, x",+\infty,y,y'_\varepsilon,b_\varepsilon)=  \overline g^*(x",y)
\end{equation}
thus $v + \varepsilon \geq  h^+(y) := \sup_{x"|p^*(x",y)>0} \left\{\overline g^*(x", y)\right\}$. 
Denote similarly $ h^- (x) =  \inf_{y"|p^*(x,y")>0}\left\{\overline g^*(x,y')\right\}$.\\
On the other hand, for any $x$ 
\[
v+ \varepsilon \geq A(x,x",0, y, y'_\varepsilon, b_\varepsilon)=\frac{g(x, y)+b_\varepsilon \: G^*(x, y'_\varepsilon)}{1+b_\varepsilon \: p^*(x,  y'_\varepsilon)}\]
Now if $p^*(x, y'_\varepsilon) >0$, $\displaystyle{\frac{g(x,  y)+b_\varepsilon \: G^*(x, y'_\varepsilon)}{1+b_\varepsilon \: p^*(x,y'_\varepsilon)}}\geq \min \{ g(x,y), 
 \overline g^*(x,y'_\varepsilon)\} $,  hence in all cases
 \[
v+ \varepsilon \geq A(x,x",0,y, y'_\varepsilon,b_\varepsilon)\geq \min\{g(x, y), h^- (x)\}.
\]
Thus  for  any $x \in X$
\begin{equation}\label{eqmedepsilon}
v + \varepsilon \geq \med\left(g(x,y); h^+ (y); h^- (x)\right)
\end{equation}
Letting $\varepsilon$ go to 0 
%
and using  the dual inequality establish the results.
%
%
%
\end{proof}  
 

\subsubsection{Further properties of optimal strategies} 
$ $ 

We establish here more precise results   concerning the  decomposition of the payoff induced by  $\varepsilon$-optimal strategies  in the game $\A$ .
\begin{proposition}\label{propabc}
Let $(x,x',a)$ and $ (y,y',b)$ be  $\varepsilon$-optimal in the game $\A$.
\begin{enumerate}
\item If $ p^* (x, y) > 0$ then $| \overline g^* (x,y)-v|\leq \varepsilon$
\item $|g(x,y)-v|\leq 2(1+ap^*(x',y)+ bp^*(x,y'))\varepsilon$ 
\item If $ap^*(x',y)+bp^*(x,y')>0$ then $\left| \displaystyle{\frac{aG^*(x',y)+bG^*(x,y')}{ap^*(x',y)+bp^*(x,y')}}-v\right|\leq 3 \displaystyle{\frac{1+ap^*(x',y)+ bp^*(x,y')}{ap^*(x',y)+ bp^*(x,y')}}\varepsilon $.
\end{enumerate}
\end{proposition}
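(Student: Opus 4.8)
The plan is to read everything off from the $\varepsilon$-optimality of $(x,x',a)$ for Player~1 and of $(y,y',b)$ for Player~2 by substituting a handful of explicit deviations into the ratio $A$ of \eqref{defA}. Put
\[
D:=1+a\,p^*(x',y)+b\,p^*(x,y'),\qquad N:=g(x,y)+a\,G^*(x',y)+b\,G^*(x,y'),
\]
so that $A(x,x',a,y,y',b)=N/D$. Evaluating the two optimality inequalities at the pair $(x,x',a;\,y,y',b)$ itself already gives the master estimate
\[
v-\varepsilon\ \le\ \frac{N}{D}\ \le\ v+\varepsilon,\qquad\text{i.e.}\qquad |N-vD|\le\varepsilon D .
\]
The three items will be obtained from this estimate plus a short list of one-sided bounds produced by letting the scaling parameters go to $0$ or to $+\infty$.

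For (1), assume $p^*(x,y)>0$. Keeping Player~1 at $(x,x',a)$ and letting Player~2 play $(y,y,b'')$ with $b''\to+\infty$ one has $A(x,x',a,y,y,b'')\to G^*(x,y)/p^*(x,y)=\overline g^*(x,y)$, and every term is $\ge v-\varepsilon$; symmetrically, keeping Player~2 at $(y,y',b)$ and letting Player~1 play $(x,x,a'')$ with $a''\to+\infty$ yields $\overline g^*(x,y)\le v+\varepsilon$. This proves (1). The same ``extreme deviation'' device, applied to the pairs $(x',y)$ and $(x,y')$ instead of $(x,y)$, gives the two auxiliary inequalities $G^*(x',y)\le(v+\varepsilon)\,p^*(x',y)$ and $G^*(x,y')\ge(v-\varepsilon)\,p^*(x,y')$, which now hold for all $x',y'$ (both sides vanish when the corresponding $p^*$ does).

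For (2), choose for Player~2 the deviation $(y,y''',0)$ in Player~1's guarantee: this gives $g(x,y)+a\,G^*(x',y)\ge(v-\varepsilon)\bigl(1+a\,p^*(x',y)\bigr)$, and inserting $G^*(x',y)\le(v+\varepsilon)\,p^*(x',y)$ turns it into $g(x,y)\ge v-\varepsilon\bigl(1+2a\,p^*(x',y)\bigr)$. Symmetrically, the deviation $(x,x''',0)$ for Player~1 in Player~2's guarantee, together with $G^*(x,y')\ge(v-\varepsilon)\,p^*(x,y')$, gives $g(x,y)\le v+\varepsilon\bigl(1+2b\,p^*(x,y')\bigr)$. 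Since $1+2a\,p^*(x',y)+2b\,p^*(x,y')\le 2D$, this is exactly $|g(x,y)-v|\le 2D\varepsilon$.

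For (3), write $P:=a\,p^*(x',y)+b\,p^*(x,y')=D-1>0$ and $R:=a\,G^*(x',y)+b\,G^*(x,y')$, so that $N=g(x,y)+R$ and $vD=v+vP$; the master estimate then reads $\bigl|(g(x,y)-v)+(R-vP)\bigr|\le\varepsilon D$, whence, by the triangle inequality and item (2), $|R-vP|\le\varepsilon D+2\varepsilon D=3\varepsilon D$, and dividing by $P$ gives the claimed bound. None of this is deep: the only genuine choices are which deviations to insert ($a,b\to+\infty$ to isolate conditional absorbing payoffs, $a=0$ or $b=0$ to isolate $g(x,y)$), and the one thing to watch is the degenerate case $p^*=0$ (where $\overline g^*$ is undefined but the $G^*$-inequalities survive because $G^*$ itself vanishes) together with the observation that the limits $a''\to+\infty$, $b''\to+\infty$ may be taken since the optimality inequalities are non-strict.
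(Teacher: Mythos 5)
Your proof is correct and follows essentially the same route as the paper: you exploit the $\varepsilon$-optimality inequalities in $\A$ by inserting the extreme deviations ($a''$ or $b''\to+\infty$ to isolate the conditional absorbing payoffs, $a=0$ or $b=0$ to isolate $g(x,y)$) and then combine, exactly as in the paper's use of $A(x,x',a,y,y',+\infty)$ and $A(x,x',0,y,y',b)$ together with the bound $|g(x,y)-v|\leq 2D\varepsilon$ to get the factor $3$ in item (3). The only differences are cosmetic: you reprove item (1) directly instead of citing the earlier displayed inequality, and you phrase the one-sided estimates symmetrically via absolute values rather than invoking ``the dual inequality is similar.''
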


\begin{proof}
a) This is exactly equation \eqref{eqv+epsilon} and its dual.

b) From $v+\varepsilon\geq A(x,x',0,y,y',b)$ we get
\[
\frac{g(x,y)+bG^*(x,y')}{1+bp^*(x,y')}\leq v+\varepsilon.
\]
On the other hand, $v-\varepsilon \leq A(x,x',a,y,y',+\infty)$ hence $G^*(x,y')\geq (v-\varepsilon) p^*(x,y')$. 
Combining both inequalities yields
\begin{eqnarray}
\nonumber g(x,y)&\leq& (v+\varepsilon)(1+bp^*(x,y'))-b(v-\varepsilon) p^*(x,y')\\
\nonumber &\leq& v+\varepsilon (1+2 bp^*(x,y'))\\
\label{eqg<v}&\leq&v+2\varepsilon(1+ap^*(x',y)+ bp^*(x,y'))
\end{eqnarray}
and the dual inequality is similar.

c) Since $A(x,x',a,y,y',b)\geq v-\varepsilon$, one has 
\begin{eqnarray*}
(v-\varepsilon)(1+ap^*(x',y)+ bp^*(x,y'))&\leq& g(x,y)+aG^*(x',y)+bG^*(x,y')\\
&\leq& v+2\varepsilon(1+ap^*(x',y)+ bp^*(x,y'))+aG^*(x',y)+bG^*(x,y')  \text{ by \eqref{eqg<v}}
\end{eqnarray*}
hence 
\[
v(ap^*(x',y)+ bp^*(x,y'))-aG^*(x',y)-bG^*(x,y')\leq 3\varepsilon(1+ap^*(x',y)+ bp^*(x,y'))
\]
and the dual inequality is similar.
\end{proof}

%
%
%
%
%
%
%
%
%

  \subsection{Asymptotics properties in $\Gamma_\lambda$ } 
  $ $ 
 
%
%
%
%
%
%
%
%
  
  
 Since the game is absorbing,  we write simply $Q^{x,y}_\lambda(t)$ for $Q^{\omega_0,x,y}_\lambda(t)(\omega_0)$, where $\omega_0$  is the nonabsorbing state.
  
  \begin{lem}\label{lemQgamma}
 Let $x_\lambda$ and $y_\lambda$ be two families of (non necessarily optimal) stationary strategies of Player 1 and Player 2 respectively. Assume that $\displaystyle{\frac{p^*(x_\lambda,y_\lambda)}{\lambda}}$ converges to some $\gamma$ in $[0,+\infty]$  as $\lambda$ goes to 0. Then 
$Q^{x_\lambda,y_\lambda}_\lambda(t)$ converges uniformly in $t$ to
$\displaystyle{\frac{1-(1-t)^{1+\gamma}}{1+\gamma}}$ as $\lambda$ goes to 0,
 with the natural convention that $\displaystyle{\frac{1-(1-t)^{1+\gamma}}{1+\gamma}}=0$ for $\gamma=+\infty$.
  \end{lem}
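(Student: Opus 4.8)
The plan is to compute $Q^{x_\lambda,y_\lambda}_\lambda(t_n)$ explicitly at the dates $t_n$, identify the limit, and then control the interpolation error. Write $p_\lambda := p^*(x_\lambda,y_\lambda)$ for the absorption probability under the fixed pair, and $q_\lambda := 1-p_\lambda$. Since the game is absorbing, starting from the nonabsorbing state $\omega_0$, the probability of still being in $\omega_0$ at stage $i$ is $q_\lambda^{i-1}$, so $q^{\omega_0,x_\lambda,y_\lambda}_i(\omega_0) = q_\lambda^{i-1}$. Hence
\[
Q^{x_\lambda,y_\lambda}_\lambda(t_n) = \lambda \sum_{i=1}^n (1-\lambda)^{i-1} q_\lambda^{i-1} = \lambda\,\frac{1-\big((1-\lambda)q_\lambda\big)^n}{1-(1-\lambda)q_\lambda}.
\]
Recall also $t_n = \lambda\sum_{i=1}^n(1-\lambda)^{i-1} = 1-(1-\lambda)^n$, so $(1-\lambda)^n = 1-t_n$.

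The key step is the asymptotic analysis as $\lambda\to 0$ with $p_\lambda/\lambda \to \gamma$. First I would treat the denominator: $1-(1-\lambda)q_\lambda = \lambda + (1-\lambda)p_\lambda = \lambda\big(1 + (1-\lambda)p_\lambda/\lambda\big) \sim \lambda(1+\gamma)$ when $\gamma<\infty$, and is $\gg\lambda$ when $\gamma=\infty$. Next, for the factor $\big((1-\lambda)q_\lambda\big)^n$, write $\log\big((1-\lambda)q_\lambda\big)^n = n\log(1-\lambda) + n\log(1-p_\lambda)$. Since $n\log(1-\lambda) = \log(1-t_n)$, and $n\log(1-p_\lambda) = \big(p_\lambda/\lambda\big)\cdot\big(n\log(1-p_\lambda)/p_\lambda\big)$ — here $n\log(1-p_\lambda)/p_\lambda \to n\cdot(-1)\cdot\lambda \cdot(\text{correction}) $, more cleanly: $n p_\lambda = (p_\lambda/\lambda)\cdot(n\lambda)$ and $n\lambda \approx -\log(1-t_n)$, so $n\log(1-p_\lambda) \approx -np_\lambda \to \gamma\log(1-t_n)$. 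Thus $\big((1-\lambda)q_\lambda\big)^n \to (1-t_n)^{1+\gamma}$, giving
\[
Q^{x_\lambda,y_\lambda}_\lambda(t_n) \to \frac{1-(1-t_n)^{1+\gamma}}{1+\gamma},
\]
with the value $0$ when $\gamma=+\infty$ (the numerator stays bounded while the denominator blows up, since $(1-\lambda)q_\lambda$ is bounded away from… — actually one must check $\big((1-\lambda)q_\lambda\big)^n\to 0$ here, which follows because $p_\lambda/\lambda\to\infty$ forces $np_\lambda\to\infty$ on the relevant range of $n$, except near $t=0$ where the numerator is itself small).

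The main obstacle, and what requires care, is making these limits \emph{uniform in $t\in[0,1]$} rather than pointwise in $t_n$, and handling the endpoints $t$ near $0$ (where $n$ is small and the expansions $n\log(1-\lambda)\approx -n\lambda$ are not uniformly valid, but the quantities are small) and $t$ near $1$ (where $n\to\infty$). The clean way is to parametrize by $s := (1-\lambda)q_\lambda \in (0,1)$, note $Q_\lambda(t_n) = \frac{1-s^n}{1-s}\cdot\lambda$ is an explicit increasing function of $t_n = 1-(1-\lambda)^n$, and compare it to the target $f_\gamma(t) = \frac{1-(1-t)^{1+\gamma}}{1+\gamma}$ directly. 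Since both $Q_\lambda$ (after linear interpolation) and $f_\gamma$ are monotone, bounded, and $1$-Lipschitz-type continuous (indeed $Q_\lambda$ has increments $\le \lambda(1-\lambda)^{i-1}q_\lambda^{i-1}\le\lambda$, hence is $1$-Lipschitz after interpolation, and $f_\gamma$ is Lipschitz with constant $1$ on $[0,1]$), pointwise convergence on a dense set of $t$ upgrades to uniform convergence by the standard equicontinuity/Dini-type argument. The interpolation error between consecutive $t_n$ is then automatically $O(\lambda)$. For $\gamma=\infty$ I would argue separately that $Q_\lambda(t)\le \lambda/(1-s) = \lambda/(\lambda+(1-\lambda)p_\lambda) \le 1/(1+(1-\lambda)p_\lambda/\lambda)\to 0$, uniformly in $t$, which is the slickest route and avoids any endpoint subtlety.
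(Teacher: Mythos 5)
Your proposal is correct and follows essentially the same route as the paper: the same closed-form expression $Q^{x_\lambda,y_\lambda}_\lambda(t_n)=\bigl(1-((1-\lambda)(1-p^*(x_\lambda,y_\lambda)))^n\bigr)/\bigl(1+\tfrac{p^*(x_\lambda,y_\lambda)}{\lambda}-p^*(x_\lambda,y_\lambda)\bigr)$, the same uniform bound $Q_\lambda(t)\le \bigl(1+\tfrac{p^*(x_\lambda,y_\lambda)}{\lambda}\bigr)^{-1}$ for the case $\gamma=+\infty$, and the same asymptotic identification of the exponent via $\ln(1-p^*)/\ln(1-\lambda)\to\gamma$. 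The only (minor) difference is how uniformity in $t$ is obtained: you upgrade pointwise convergence through the uniform $1$-Lipschitz bound on the interpolated $Q_\lambda$ and on the limit, whereas the paper sandwiches $Q_\lambda(t)$ between two explicit functions of $t$ obtained by taking $n=\lfloor \ln(1-t)/\ln(1-\lambda)\rfloor$; both devices are standard and valid.
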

  
  \begin{proof}
  By definition \ref{dQ}, for any $\lambda$ and $t_n= \lambda\sum_{i=1}^n (1-\lambda)^{i-1}=1-(1-\lambda)^n$,
  \begin{eqnarray*}
  Q^{x_\lambda,y_\lambda}_\lambda(t_n)&= &\lambda\sum_{i=1}^n (1-\lambda)^{i-1} (1-p^*(x_\lambda,y_\lambda))^{i-1}\\
  &=&\frac{1-((1-\lambda) (1-p^*(x_\lambda,y_\lambda)))^n}{1+\frac{p^*(x_\lambda,y_\lambda)}{\lambda}-p^*(x_\lambda,y_\lambda)}
  \end{eqnarray*}
  with linear interpolation between these dates.
  
Remark first that this implies that $Q^{x_\lambda,y_\lambda}_\lambda(t)\leq [{1+\frac{p^*(x_\lambda,y_\lambda)}{\lambda}}]^{-1}$ for all $t$ and $\lambda$, which gives at the limit the desired result if $\gamma=+\infty$. 

Assume now that $\gamma\in[0,+\infty[$, and thus that $p^*(x_\lambda,y_\lambda)$ tends to 0 as $\lambda$ goes to 0. Fix $t$ and $\lambda$, and let $n$ be the integer part of $\frac{\ln(1-t)}{\ln(1-\lambda)}$ so that $t_n\leq t \leq t_{n+1}$. Since $Q^{x_\lambda,y_\lambda}_\lambda(t_n)$ is decreasing in $n$,

\begin{eqnarray}
\nonumber Q^{x_\lambda,y_\lambda}_\lambda(t)&\leq&Q^{x_\lambda,y_\lambda}_\lambda(t_n)\\
\nonumber&=&\frac{1-((1-\lambda) (1-p^*(x_\lambda,y_\lambda)))^n}{1+\frac{p^*(x_\lambda,y_\lambda)}{\lambda}-p^*(x_\lambda,y_\lambda)}\\
\nonumber&\leq&\frac{1-((1-\lambda) (1-p^*(x_\lambda,y_\lambda)))^{\frac{\ln(1-t)}{\ln(1-\lambda)}-1}}{1+\frac{p^*(x_\lambda,y_\lambda)}{\lambda}-p^*(x_\lambda,y_\lambda)}\\
\label{eqQleq}&=&\frac{1-(1-t)^{1+\frac{\ln(1-p^*(x_\lambda,y_\lambda))}{\ln(1-\lambda)}-\ln(1-\lambda)}}{1+\frac{p^*(x_\lambda,y_\lambda)}{\lambda}-p^*(x_\lambda,y_\lambda)}
\end{eqnarray}

  Similarly,
  \begin{eqnarray}
\nonumber Q^{x_\lambda,y_\lambda}_\lambda(t)&\geq&Q^{x_\lambda,y_\lambda}_\lambda(t_{n+1})\\
\nonumber&=&\frac{1-((1-\lambda) (1-p^*(x_\lambda,y_\lambda)))^{n+1}}{1+\frac{p^*(x_\lambda,y_\lambda)}{\lambda}-p^*(x_\lambda,y_\lambda)}\\
\nonumber&\geq&\frac{1-((1-\lambda) (1-p^*(x_\lambda,y_\lambda)))^{\frac{\ln(1-t)}{\ln(1-\lambda)}+1}}{1+\frac{p^*(x_\lambda,y_\lambda)}{\lambda}-p^*(x_\lambda,y_\lambda)}\\
\label{eqQgeq}&=&\frac{1-(1-t)^{1+\frac{\ln(1-p^*(x_\lambda,y_\lambda))}{\ln(1-\lambda)}+\ln(1-\lambda)}}{1+\frac{p^*(x_\lambda,y_\lambda)}{\lambda}-p^*(x_\lambda,y_\lambda)}
\end{eqnarray}

Letting $\lambda$ go to 0 in \eqref{eqQleq} and \eqref{eqQgeq}  yields the result.
  \end{proof}
  
For any $(x,x',a)\in X^2\times\mathbb{R}^+$  and $(y,y',b)\in Y^2\times\mathbb{R}^+$ define 
\[
\gamma(x,x',a,y,y',b)=\begin{cases} +\infty & \text{ if $p^*(x,y)>0$}\\
 ap^*(x',y)+bp^*(x,y')&\text{ if $p^*(x,y)=0$}
 \end{cases}
  \]
  
  An immediate consequence of the previous lemma is
  
  \begin{corollary}\label{corQgamma}
Let $(x,x',a)\in X^2\times\mathbb{R}^+$  and $(y,y',b)\in Y^2\times\mathbb{R}^+$ and denote $\hat x_\lambda:=\frac{x+\lambda ax'}{1+\lambda a}$ and $\hat y_\lambda:=\frac{y+\lambda by'}{1+\lambda b}$. Then $Q^{\hat x_\lambda,\hat y_\lambda}_\lambda(t)$  converges uniformly in $t$ to
$\displaystyle{\frac{1-(1-t)^{1+\gamma(x,x',a,y,y',b)}}{1+\gamma(x,x',a,y,y',b)}}$ as $\lambda$ goes to 0.
  \end{corollary}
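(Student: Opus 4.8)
The plan is to deduce Corollary~\ref{corQgamma} directly from Lemma~\ref{lemQgamma} by applying the latter to the specific families $\hat x_\lambda:=\frac{x+\lambda a x'}{1+\lambda a}$ and $\hat y_\lambda:=\frac{y+\lambda b y'}{1+\lambda b}$. The only thing that needs checking is that $\frac{p^*(\hat x_\lambda,\hat y_\lambda)}{\lambda}$ converges to $\gamma(x,x',a,y,y',b)$ as $\lambda\to 0$; once this is established, Lemma~\ref{lemQgamma} gives exactly the claimed uniform limit, and the convention for $\gamma=+\infty$ matches the one already stated there.

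First I would expand $p^*(\hat x_\lambda,\hat y_\lambda)$ using the bilinearity of $p^*$ on $X\times Y$. Writing $\hat x_\lambda=\frac{x+\lambda a x'}{1+\lambda a}$ and $\hat y_\lambda=\frac{y+\lambda b y'}{1+\lambda b}$, bilinearity gives
\[
p^*(\hat x_\lambda,\hat y_\lambda)=\frac{p^*(x,y)+\lambda b\,p^*(x,y')+\lambda a\,p^*(x',y)+\lambda^2 ab\,p^*(x',y')}{(1+\lambda a)(1+\lambda b)}.
\]
Dividing by $\lambda$,
\[
\frac{p^*(\hat x_\lambda,\hat y_\lambda)}{\lambda}=\frac{\frac{1}{\lambda}p^*(x,y)+b\,p^*(x,y')+a\,p^*(x',y)+\lambda ab\,p^*(x',y')}{(1+\lambda a)(1+\lambda b)}.
\]
Now distinguish the two cases in the definition of $\gamma$. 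If $p^*(x,y)>0$, the numerator behaves like $\frac{1}{\lambda}p^*(x,y)\to+\infty$ while the denominator tends to $1$, so $\frac{p^*(\hat x_\lambda,\hat y_\lambda)}{\lambda}\to+\infty=\gamma$. If $p^*(x,y)=0$, the term $\frac{1}{\lambda}p^*(x,y)$ vanishes, the $\lambda ab\,p^*(x',y')$ term goes to $0$, the denominator tends to $1$, and we get $\frac{p^*(\hat x_\lambda,\hat y_\lambda)}{\lambda}\to a\,p^*(x',y)+b\,p^*(x,y')=\gamma$. In both cases the limit is $\gamma(x,x',a,y,y',b)\in[0,+\infty]$.

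With this convergence in hand, Lemma~\ref{lemQgamma} applied to the families $(\hat x_\lambda,\hat y_\lambda)$ yields that $Q^{\hat x_\lambda,\hat y_\lambda}_\lambda(t)$ converges uniformly in $t$ to $\frac{1-(1-t)^{1+\gamma}}{1+\gamma}$ (interpreted as $0$ when $\gamma=+\infty$), which is precisely the assertion of the corollary. There is no real obstacle here: the statement is essentially a bookkeeping corollary, and the only mild care needed is the case split for $p^*(x,y)>0$ versus $p^*(x,y)=0$ in evaluating the limit of $\frac{p^*(\hat x_\lambda,\hat y_\lambda)}{\lambda}$, together with noting that the exceptional convention in Lemma~\ref{lemQgamma} for $\gamma=+\infty$ transfers verbatim.
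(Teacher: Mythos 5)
Your proof is correct and matches the paper's intent: the paper presents the corollary as an immediate consequence of Lemma \ref{lemQgamma}, and your verification that $\frac{p^*(\hat x_\lambda,\hat y_\lambda)}{\lambda}\to\gamma(x,x',a,y,y',b)$ via bilinearity (with the case split on $p^*(x,y)>0$ versus $p^*(x,y)=0$) is exactly the bookkeeping that justifies applying the lemma.
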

\begin{proposition}
Any absorbing game has a LOTM $Q(t)=\frac{1-(1-t)^{1+\gamma}}{1+\gamma}$ for some $\gamma\in [0,+\infty]$, and a LOTP $l(t)=tv$.
\end{proposition}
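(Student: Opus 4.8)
The plan is to combine Proposition~\ref{LL} with Corollary~\ref{corQgamma} and Proposition~\ref{propabc}. First I would fix $\varepsilon>0$ and use part~2) of Proposition~\ref{LL}: take $(x,x',a)$ and $(y,y',b)$ to be $\varepsilon$-optimal in the auxiliary game $\A$ (by part~1) such profiles exist, and one may even assume $x$ and $y$ do not depend on $\varepsilon$ as noted there). Then for every $\lambda$ small enough the stationary strategies $\hat x_\lambda:=\frac{x+\lambda ax'}{1+\lambda a}$ and $\hat y_\lambda:=\frac{y+\lambda by'}{1+\lambda b}$ are $2\varepsilon$-optimal in $\Gamma_\lambda$, hence lie in $\bX^{2\varepsilon}_\lambda$ and $\bY^{2\varepsilon}_\lambda$. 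These are therefore legitimate candidate strategies for the definitions of $LOTM$ and $LOTP$.

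Next I would identify the limit trajectories. For the occupation measure, Corollary~\ref{corQgamma} tells us directly that $Q^{\hat x_\lambda,\hat y_\lambda}_\lambda(t)$ converges uniformly in $t$ to $\frac{1-(1-t)^{1+\gamma}}{1+\gamma}$ with $\gamma=\gamma(x,x',a,y,y',b)\in[0,+\infty]$; this gives the claimed $LOTM$ up to checking that $\gamma$ can be taken independent of $\varepsilon$ (again using that $x,y$ are fixed, and passing to a subsequence in $\varepsilon$ if needed so that the finitely many quantities $p^*(x,y)$, $ap^*(x',y)$, $bp^*(x,y')$ stabilize or their relevant combination converges). For the payoff, I would write $l^{\omega_0,\hat x_\lambda,\hat y_\lambda}_\lambda(t)$ in terms of the cumulated payoff and split it according to whether absorption has occurred: the contribution is $c_i = q_i\, g(\hat x_\lambda,\hat y_\lambda) + (\text{mass already absorbed})\cdot \overline g^*(\cdot)$. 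Using Lemma~\ref{lemQgamma} to control $Q^{\hat x_\lambda,\hat y_\lambda}_\lambda(t)$ (the non-absorbed mass at time $t$) and $t-Q^{\hat x_\lambda,\hat y_\lambda}_\lambda(t)$ (the absorbed mass), together with the estimates of Proposition~\ref{propabc} — namely $|g(\hat x_\lambda,\hat y_\lambda)-v|$ and $|\overline g^*-v|$ are $O(\varepsilon)$ on the relevant ranges — one gets that each increment of $l_\lambda$ is approximately $v$ times the increment of $t$, so $l^{\omega_0,\hat x_\lambda,\hat y_\lambda}_\lambda(t)\to tv$ uniformly.

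The main obstacle I expect is the bookkeeping in the payoff estimate: Proposition~\ref{propabc} gives bounds with factors like $\frac{1+ap^*(x',y)+bp^*(x,y')}{ap^*(x',y)+bp^*(x,y')}$, which blow up when the absorption rate is small — precisely the regime $\gamma<\infty$. One must therefore argue more carefully there, decomposing $l_\lambda(t)$ as (non-absorbed mass)$\times g(\hat x_\lambda,\hat y_\lambda)$ plus (absorbed mass)$\times$(average absorbing payoff), noting that when the absorbed mass is small it contributes little regardless, and when it is non-negligible the weighted absorbing payoff $\frac{aG^*(x',y)+bG^*(x,y')}{ap^*(x',y)+bp^*(x,y')}$ is within $O(\varepsilon)$ of $v$ by part~3), while $g(\hat x_\lambda,\hat y_\lambda)$ is within $O(\varepsilon)$ of $v$ by part~2) since the prefactor stays bounded in that case. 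Carrying this dichotomy through uniformly in $t$, and then letting $\varepsilon\to0$, yields $l(t)=tv$. A final remark is that strictly speaking this produces a $LOT$ with $2\varepsilon$-optimal strategies for each $\varepsilon$, which is exactly the definition after relabelling $\varepsilon$.
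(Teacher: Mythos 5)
Your plan is essentially the paper's proof: take $\varepsilon$-optimal (in the paper, $\tfrac1n$-optimal) profiles in $\A$ with $x,y$ fixed, transfer them to $\Gamma_\lambda$ via Proposition \ref{LL}(2), get the LOTM from Corollary \ref{corQgamma} plus an extraction in $\varepsilon$ so that $\gamma_n\to\gamma$ (together with the uniform-in-$t$ continuity of $\gamma\mapsto\frac{1-(1-t)^{1+\gamma}}{1+\gamma}$, the paper's inequality \eqref{eqgamman}), and get the LOTP from the decomposition $l_\lambda=Q_\lambda\, g+(t-Q_\lambda)\,\overline g^*$ combined with Proposition \ref{propabc}. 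One bookkeeping slip in your payoff dichotomy: when the absorbed mass is non-negligible you assert that $g(\hat x_\lambda,\hat y_\lambda)$ is within $O(\varepsilon)$ of $v$ ``since the prefactor stays bounded in that case''; this is false when $\gamma=+\infty$ (e.g. $p^*(x,y)>0$, where $a$ may blow up as $\varepsilon\to 0$), and in the subcase $p^*(x,y)>0$ part 3) is not the right tool for the absorbing payoff either --- part 1) is. The repair is the symmetric observation you already use for the other term: when $\gamma=+\infty$ the non-absorbed mass $Q(t)$ vanishes, so the $g$-term is negligible regardless of its value; once this case split is made explicit (these are exactly Cases 1 and 4 of the paper's proof, Cases 2 and 3 being the bounded-prefactor regime you describe), your argument goes through.
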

        
\begin{proof}
For every $n$ let  $(x,x'_n,a_n)$ and $(y,y'_n,b_n)$ be $\frac{1}{n}$-optimal strategies for each player in $\A$ (recall that $x$ and $y$ can be chosen independently on $n$). Up to extraction $\gamma_n:=\gamma(x,x'_n,a_n,y,y'_n,b_n)$ converges to some $\gamma$ in $[0,+\infty]$.

Fix $\varepsilon>0$ and let $n\geq\frac{2}{\varepsilon}$ such that 
\begin{equation}\label{eqgamman}
\left|\frac{1-(1-t)^{1+\gamma_n}}{1+\gamma_n}-\frac{1-(1-t)^{1+\gamma}}{1+\gamma}\right|\leq \frac{\varepsilon}{2}
\end{equation}
on [0,1]. By Proposition \ref{LL} the strategies $\hat x^n_\lambda:=\frac{x+\lambda a_nx'_n}{1+\lambda a_n}$ and $\hat y^n_\lambda:=\frac{y+\lambda b_ny'_n}{1+\lambda b_n}$ are $\varepsilon$-optimal in $\Gamma_\lambda$ for $\lambda$ small enough. Corollary \ref{corQgamma} and equation \eqref{eqgamman} imply that 
\begin{equation}\label{eqQabsorb}
\left|Q^{\hat x^n_\lambda,\hat y^n_\lambda}_\lambda(t)-\frac{1-(1-t)^{1+\gamma}}{1+\gamma}\right|\leq {\varepsilon}
\end{equation}
for all $\lambda$ small enough and $t\in[0,1]$. This answers the first part of the Proposition.

Clearly \begin{equation}\label{eqlQabsorb}
l^{\hat x^n_\lambda,\hat y^n_\lambda}_\lambda(t)=Q^{\hat x^n_\lambda,\hat y^n}_\lambda(t) g(\hat x^n_\lambda,\hat y^n_\lambda)+\left(t-Q^{\hat x^n_\lambda,\hat y^n}_\lambda(t)\right) {\overline g}^*(\hat x^n_\lambda,\hat y^n_\lambda). 
\end{equation}

Recall that the payoff function is assumed bounded by 1. Then equations \eqref{eqQabsorb} and \eqref{eqlQabsorb} imply that for $\lambda$ small enough and every $t$,
\[
\left|l^{\hat x^n_\lambda,\hat y^n_\lambda}_\lambda(t)-\frac{1-(1-t)^{1+\gamma}}{1+\gamma}g(\hat x^n_\lambda,\hat y^n_\lambda)-\left(t-\frac{1-(1-t)^{1+\gamma}}{1+\gamma}\right){\overline g}^*(\hat x^n_\lambda,\hat y^n_\lambda) \right|\leq 2\varepsilon
\]

Since $x^n_\lambda$ and $y^n_\lambda$ converge to $x$ and $y$, we then have for $\lambda$ small enough

\begin{equation}\label{eqlabsorb}
\left|l^{\hat x^n_\lambda,\hat y^n_\lambda}_\lambda(t)-\frac{1-(1-t)^{1+\gamma}}{1+\gamma}g(x,y)-\left(t-\frac{1-(1-t)^{1+\gamma}}{1+\gamma}\right){\overline g}^*(\hat x^n_\lambda,\hat y^n_\lambda) \right|\leq 3\varepsilon 
\end{equation}.

We now consider four separate cases. Basically either $\gamma=0$ or $+\infty$ and equation \eqref{eqlabsorb} implies that $l(t)$ is linear, and hence equals $tv$ since $l(1)=v$ by near optimality of the strategies  $\hat x^n_\lambda$ and $\hat y^n_\lambda$ in $\Gamma_\lambda$ ; or $\gamma\in]0,+\infty[$ and then both $g(x,y)$ and ${\overline g}^*(\hat x^n_\lambda,\hat y^n_\lambda)$ are close to $v$ by Proposition \ref{propabc}, which once again implies $l(t)=tv$.

Case 1 : $p^*(x,y)>0$. Then ${\overline g}^*(\hat x^n_\lambda,\hat y^n_\lambda)$ converges to  ${\overline g}^*(x,y)$ as $\lambda$ go to 0, and by Proposition \ref{propabc} a)  $|{\overline g}^*(x,y)-v|\leq \varepsilon$. Since $\gamma=+\infty$ in that case, equation \eqref{eqlabsorb} yields $|l^{\hat x^n_\lambda,\hat y^n_\lambda}_\lambda(t)-tv| \leq 5\varepsilon $ for all $\lambda$ small enough, uniformly in $t$.

 Case 2 : $p^*(x,y)=0$ and $\gamma=0$, hence $\gamma_n\leq 1$ (up to chosing a larger $n$). Then Proposition \ref{propabc} b) implies $|{g}(x,y)-v|\leq 4\varepsilon$, and  equation \eqref{eqlabsorb} yields $|l^{\hat x^n_\lambda,\hat y^n_\lambda}_\lambda(t)-tv| \leq 7\varepsilon $ for all $\lambda$ small enough, uniformly in $t$.
 
 Case 3 : $p^*(x,y)=0$ and $\gamma\in]0,+\infty[$, hence $\frac{\gamma}{2}\leq\gamma_n\leq 1+\gamma$ (up to chosing a larger $n$). Then Proposition \ref{propabc} b) implies $|{g}(x,y)-v|\leq 2(2+\gamma)\varepsilon$. Moreover, $p^*(x,y)=0$ implies that ${\overline g}^*(\hat x^n_\lambda,\hat y^n_\lambda)$ converges to $\displaystyle{\frac{aG^*(x'_n,y)+bG^*(x,y'_n)}{ap^*(x'_n,y)+bp^*(x,y'_n)}}$ as $\lambda$ go to 0, and by Proposition \ref{propabc} c) $\left|\displaystyle{\frac{aG^*(x'_n,y)+bG^*(x,y'_n)}{ap^*(x'_n,y)+bp^*(x,y'_n)}}-v\right|\leq 3\frac{1+\gamma/2}{\gamma/2}\varepsilon$.
 
Hence equation \eqref{eqlabsorb} yields $|l^{\hat x^n_\lambda,\hat y^n_\lambda}_\lambda(t)-tv| \leq (7+2\gamma+ 3\frac{1+\gamma/2}{\gamma/2})\varepsilon $ for all $\lambda$ small enough, uniformly in $t$. 
 
Case 4 : $p^*(x,y)=0$ and $\gamma=+\infty$, hence $\gamma_n\geq 1$ (up to chosing a larger $n$). Then ${\overline g}^*(\hat x^n_\lambda,\hat y^n_\lambda)$ converges to $\displaystyle{\frac{aG^*(x'_n,y)+bG^*(x,y'_n)}{ap^*(x'_n,y)+bp^*(x,y'_n)}}$ as $\lambda$ go to 0, and by Proposition \ref{propabc} c) $\left|\displaystyle{\frac{aG^*(x'_n,y)+bG^*(x,y'_n)}{ap^*(x'_n,y)+bp^*(x,y'_n)}}-v\right|\leq 6\varepsilon$. Thus equation \eqref{eqlabsorb} yields $|l^{\hat x^n_\lambda,\hat y^n_\lambda}_\lambda(t)-tv| \leq 9\varepsilon $ for all $\lambda$ small enough, uniformly in $t$.

As claimed, in every case we see that $l(t)=tv$.
\end{proof}

\begin{rem}
\end{rem}
Recall that $Q(\cdot)$ and $l(\cdot)$ represent expected  \emph{cumulated} occupation measure and payoff. By deriving these quantities with respect to $t$ we get that the asymptotic probability  $q(t)$ of  still being in the non absorbing  state at time $t$ is $(1-t)^\gamma$, and that the current asymptotic payoff is $v$ at any  time.

\begin{rem}
\end{rem}
Let us give a simple heuristic behind the form  $(1-t)^\gamma$ for  $q(t)$. Assuming that this quantity is well defined and smooth, note that at time $t$ the remaining game has a length $1-t$ and weight $ 1 - q(t)$ hence by renormalization (see figure below)
 
  $$
 \frac{q'(t)}{1-q(t)} (1-t) = q'(0)
 $$
so that
 $$
 - \frac{q'(t)}{1- q(t)} = - \frac{q'(0)}{ 1- t}
 $$
 which leads, with $q(0) = 0$ to
  $q(t) = 1 - (1-t) ^\gamma$ for some $\gamma$.

 
 \scalebox{1} { \begin{tikzpicture}
\draw (0,10) - - (10,10);
\draw (0,7) - - (10,7);
\draw (0,0) - - (10,0);
\draw (0,0) - - (0,10);
\draw (5,0) - - (5,10);
\draw (10,0) - - (10,10);
\draw (0,0) node [below] {$0$};
\draw (5,0) node [below] {$t$};
\draw (10,0) node [below] {$1$};
\draw (0,7) node [left] {$q(t)$};
\draw (0,10) node [left] {$1$};
\draw (0.5,0) node [above right] {$q'(0)$};
\draw (6,7) node [above right] {$q'(t)$};
\draw [->] [thick] (0,0) - - (0.4,1);
\draw [->] [thick]  (5, 7) - - (6,8);
\draw (0,0) .. controls +(0.4,1) and +(-1,-1) .. (5,7);
\draw (5,7) .. controls +(1,1) and +(-2,-1) .. (10,10);
\end{tikzpicture}}

Let us illustrate now  the four cases in the preceding proof by giving examples.
\begin{example}
\end{example}
Consider the absorbing game
$$
\begin{tabular}{rcc}
&L&R\\  \cline{2-3} T&\multicolumn{1}{|c|}{$1^*$}&\multicolumn{1}{|c|}{$0$}\\
\cline{2-3}
B&\multicolumn{1}{|c|}{$0$}&\multicolumn{1}{|c|}{$1^*$}\\
\cline{2-3}
\end{tabular}
$$
with asymptotic value 1.
Let $x=1/2T+1/2 B$. Then $(x,x,n)$ is $1/n$-optimal in $\A$ for Player 1, while any $(y,y',b)$ is optimal for Player 2. Since $p^*(x,y)>0$ for all $y$ case 1occurs  for any choice of $(y,y',b)$, hence the corresponding $\gamma$ is $+\infty$ and $Q(t)=0$ for all $t$.

Notice that in $\Gamma_\lambda$ the only optimal stationary strategy is $(1/2,1/2)$ for each player, leading to the same asymptotic trajectory $Q(.)=0$. I

\begin{example}
\end{example}
Consider the absorbing game
$$
\begin{tabular}{rcc}
&L&R\\  \cline{2-3} T&\multicolumn{1}{|c|}{$1^*$}&\multicolumn{1}{|c|}{$0$}\\
\cline{2-3}
B&\multicolumn{1}{|c|}{$0$}&\multicolumn{1}{|c|}{$1$}\\
\cline{2-3}
\end{tabular}
$$
with asymptotic value 1.
Then $(B,T,n)$ is $1/n$-optimal in $\A$, while any $(y,y',b)$ is optimal. The associated $\gamma$ is $ny(L)$, hence either $y(L)=0$ and case 2 holds with $\gamma=0$ and $Q(t)=t$, or $y(L)>0$ and  case 4  occurs with $\gamma=+\infty$ and $Q(t)=0$. 

Notice that in $\Gamma_\lambda$ the only optimal stationary strategy is $x_\lambda=y_\lambda=(\frac{\sqrt{\lambda}}{1+\sqrt{\lambda}},\frac{1}{1+\sqrt{\lambda}})$ for each player. Since $p^*(x_\lambda, y_\lambda)=\frac{\lambda}{(1+\sqrt{\lambda})^2}\sim \lambda$, Lemma \ref{lemQgamma} implies that the asymptotic trajectory associated to optimal strategies is $Q(t)=t-\frac{t^2}{2}$. Moreover for any $\gamma\geq0$ the strategy of Player 2 $z_\lambda=(\frac{\gamma\sqrt{\lambda}}{1+\sqrt{\lambda}},1-\frac{\gamma\sqrt{\lambda}}{1+\sqrt{\lambda}})$ is $\varepsilon$-optimal in $\Gamma_\lambda$ for $\lambda$ small enough, and $p^*(x_\lambda, z_\lambda)\sim \gamma \lambda$ hence any $Q(t)$ of the form $\frac{1-(1-t)^{1+\gamma}}{1+\gamma}$ is an asymptotic behavior.

\begin{example}
\end{example}
Consider the Big Match
$$
\begin{tabular}{rcc}
&L&R\\  \cline{2-3} T&\multicolumn{1}{|c|}{$1^*$}&\multicolumn{1}{|c|}{$0^*$}\\
\cline{2-3}
B&\multicolumn{1}{|c|}{$0$}&\multicolumn{1}{|c|}{$1$}\\
\cline{2-3}
\end{tabular}
$$
with asymptotic value 1/2. Let $y=1/2L+1/2R$.
Then $(B,T,1)$ and $(y,y,0)$ are optimal in $\A$, with $\gamma=1$. Hence  case 3 holds, and the corresponding $Q(t)$ is $t-\frac{t^2}{2}$. The optimal strategies in $\Gamma_\lambda$ are $(\frac{\lambda}{1+\lambda},\frac{1}{1+\lambda})$ and $(1/2,1/2)$ respectively, leading to the same $Q(t)=t -\frac{t^2}{2}$. 

%
%

%
%
%

 \subsection{Finite case } $ $ \\
 We now prove that when the game is finite, the limit payoff  trajectory is linear  for every couple of near optimal stationary strategies, not only those given by 
 Proposition \ref{LL}. That is, $l(t)=tv$ is a strong limit behavior for the cumulated payoff.
 
 \begin{proposition}\label{Propfinite}
 Let $\Gamma$ be a finite absorbing game with asymptotic value $v$,  $x_\lambda$ and $y_\lambda$ families of $\varepsilon(\lambda)$-optimal  stationary  strategies in $\Gamma_\lambda$, with $\varepsilon(\lambda)$ going to 0 as $\lambda$ goes to 0. Then for every $t\in [0,1]$, $l^{{x}_\lambda,{y}_\lambda}_\lambda(t)$ converges to $tv$ as $\lambda$ goes to 0. 
 \end{proposition}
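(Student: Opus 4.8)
The plan is to reduce the statement to the analysis already carried out in Proposition~\ref{propabc} by showing that \emph{every} family of $\varepsilon(\lambda)$-optimal stationary strategies in $\Gamma_\lambda$, when written in the normalized coordinates of the auxiliary game $\A$, behaves asymptotically like a near-optimal triple $(x,x',a)$. Concretely, given $x_\lambda$, the finiteness of $I$ lets us decompose its support (after passing to a subsequence) into the set $I_0$ of actions whose weight stays bounded below and the set of actions whose weight tends to $0$; writing $x_\lambda = x + \lambda a_\lambda x'_\lambda + o(\lambda)$-type expansions is not quite available in general, so instead I would directly use \eqref{rlambda}: for the pair $(x_\lambda,y_\lambda)$,
\[
r_\lambda(x_\lambda,y_\lambda)=\frac{\lambda g(x_\lambda,y_\lambda)+(1-\lambda)G^*(x_\lambda,y_\lambda)}{\lambda+(1-\lambda)p^*(x_\lambda,y_\lambda)},
\]
and $|r_\lambda(x_\lambda,y_\lambda)-v_\lambda|\to 0$ by near optimality, while $v_\lambda\to v$. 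The first key step is therefore: pass to a subsequence along which $p^*(x_\lambda,y_\lambda)/\lambda\to\gamma\in[0,+\infty]$, so that by Lemma~\ref{lemQgamma}, $Q^{x_\lambda,y_\lambda}_\lambda(t)\to \frac{1-(1-t)^{1+\gamma}}{1+\gamma}=:Q(t)$ uniformly in $t$; and recall the identity
\[
l^{x_\lambda,y_\lambda}_\lambda(t)=Q^{x_\lambda,y_\lambda}_\lambda(t)\,g(x_\lambda,y_\lambda)+\bigl(t-Q^{x_\lambda,y_\lambda}_\lambda(t)\bigr)\,\overline g^*(x_\lambda,y_\lambda).
\]
So everything reduces to controlling the limits of $g(x_\lambda,y_\lambda)$ and of $\overline g^*(x_\lambda,y_\lambda)$ (the latter only when $p^*(x_\lambda,y_\lambda)>0$), and then splitting into the same four cases as in Proposition~\ref{propabc}.

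The second key step is to establish, purely from $\varepsilon(\lambda)$-optimality in $\Gamma_\lambda$, the analogues of the three estimates of Proposition~\ref{propabc}. For (a): if $p^*(x_\lambda,y_\lambda)$ does not tend to $0$ fast (case $\gamma=+\infty$), one uses that $x_\lambda$ being near-optimal gives $v_\lambda-\varepsilon(\lambda)\le r_\lambda(x_\lambda,y)$ for all $y$, in particular for deviations of Player~2 to pure actions, and dually $r_\lambda(x,y_\lambda)\le v_\lambda+\varepsilon(\lambda)$; combined with \eqref{rlambda} and taking $\lambda\to0$, a compactness/subsequence argument forces the limiting absorbing payoff $\overline g^*(x_\lambda,y_\lambda)$ (which converges to $\overline g^*(\overline x,\overline y)$ for accumulation points $\overline x,\overline y$) to equal $v$. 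For (b): multiplying $r_\lambda(x_\lambda,y)\ge v_\lambda-\varepsilon(\lambda)$ and $r_\lambda(x,y_\lambda)\le v_\lambda+\varepsilon(\lambda)$ by their denominators $\lambda+(1-\lambda)p^*$ and rearranging — exactly as in the proof of Proposition~\ref{propabc}(b), with $\frac{1-\lambda}{\lambda}$ playing the role of $a,b$ — yields $|g(x_\lambda,y_\lambda)-v|\le C\,\bigl(1+\tfrac{(1-\lambda)}{\lambda}p^*(x_\lambda,y_\lambda)\bigr)\varepsilon(\lambda)+o(1)$, which tends to $0$ precisely when $\gamma<+\infty$ (so $\tfrac{p^*}{\lambda}$ stays bounded). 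For (c): when $p^*(x_\lambda,y_\lambda)=o(1)$ but $\tfrac{p^*}{\lambda}\to\gamma>0$, the same manipulation controls $G^*(x_\lambda,y_\lambda)/p^*(x_\lambda,y_\lambda)=\overline g^*(x_\lambda,y_\lambda)$ near $v$. Then, case by case (Case~1: $\gamma=+\infty$ with $p^*$ bounded below; Case~2: $\gamma=0$; Case~3: $\gamma\in(0,\infty)$; Case~4: $\gamma=+\infty$ with $p^*\to0$), the displayed identity for $l^{x_\lambda,y_\lambda}_\lambda(t)$ gives $|l^{x_\lambda,y_\lambda}_\lambda(t)-tv|\le C'\varepsilon(\lambda)+o(1)$ uniformly in $t$, hence convergence to $tv$.

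Finally, since the limit $tv$ does not depend on the chosen subsequence, the full family $l^{x_\lambda,y_\lambda}_\lambda(t)$ converges to $tv$, which is the claim.

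\medskip

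\noindent\textbf{Main obstacle.} The delicate point is obtaining the estimates of Proposition~\ref{propabc} directly from $\varepsilon(\lambda)$-optimality in $\Gamma_\lambda$ rather than from $\varepsilon$-optimality in $\A$ — in the auxiliary game one has the \emph{exact} inequalities $A(x,x',\cdot,y,y',\cdot)\gtrless v\pm\varepsilon$ for all second (resp.\ first) arguments, whereas here one only has $r_\lambda(x_\lambda,y)\ge v_\lambda-\varepsilon(\lambda)$ for genuine strategies $y$ in $\Gamma_\lambda$, and passing to the limit requires the quantities $p^*(x_\lambda,y_\lambda)/\lambda$, $G^*(x_\lambda,y_\lambda)/p^*(x_\lambda,y_\lambda)$, $g(x_\lambda,y_\lambda)$ to stabilize along subsequences. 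This is exactly where \textbf{finiteness of $I$ and $J$} is used: it guarantees that the supports of $x_\lambda,y_\lambda$ are eventually constant along a subsequence, so that $p^*(x_\lambda,y_\lambda)$, $g(x_\lambda,y_\lambda)$ and $\overline g^*(x_\lambda,y_\lambda)$ are ratios of polynomials in the (convergent) weights and in $\lambda$, whose limits can be computed; in the compact case the absorbing payoff $\overline g^*(x_\lambda,y_\lambda)$ need not converge even after passing to a subsequence and to accumulation points of $x_\lambda,y_\lambda$, which is why the strong LOTP statement is restricted to finite games.
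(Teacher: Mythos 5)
Your first step (extract a subsequence with $p^*(x_\lambda,y_\lambda)/\lambda\to\gamma$, apply Lemma \ref{lemQgamma}, use the identity $l^{x_\lambda,y_\lambda}_\lambda(t)=Q^{x_\lambda,y_\lambda}_\lambda(t)g(x_\lambda,y_\lambda)+(t-Q^{x_\lambda,y_\lambda}_\lambda(t))\overline g^*(x_\lambda,y_\lambda)$, and dispose of $\gamma=0$ and $\gamma=+\infty$ via $l(1)=\lim r_\lambda(x_\lambda,y_\lambda)=v$) coincides with the paper's setup. The gap is in your second step, which is the heart of the proposition: for $\gamma\in(0,+\infty)$ the analogues of Proposition \ref{propabc}(b),(c) do \emph{not} follow from $\varepsilon(\lambda)$-optimality in $\Gamma_\lambda$ by ``the same manipulation''. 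In the auxiliary game the key inequality $G^*(x,y')\geq (v-\varepsilon)p^*(x,y')$ is available because the $\varepsilon$-optimal object is a triple whose guarantee holds for the \emph{fixed} main part $x$ against the deviation $b=+\infty$; in $\Gamma_\lambda$ the guarantee attaches to the whole mixture $x_\lambda$, and its limit $x$ is in general not near-optimal (in the Big Match $x_\lambda$ tends to $B$, which guarantees nothing). The deviations you actually dispose of — Player 2 to pure actions, or either player to the limit $x$ or $y$ — give, after multiplying through the denominator of \eqref{rlambda}, inequalities whose slack is of order $\lambda$, i.e. exactly the order of the absorbing terms you want to isolate. Concretely, in the limit they only produce the constraints $\frac{g(x,y)+\mu_{1,3}}{1+\gamma_{1,3}}\leq v$, $\frac{g(x,y)+\mu_{3,1}}{1+\gamma_{3,1}}\geq v$ and $\frac{g(x,y)+\mu_{1,3}+\mu_{2,2}+\mu_{3,1}}{1+\gamma_{1,3}+\gamma_{2,2}+\gamma_{3,1}}=v$, where the $\gamma$'s and $\mu$'s are the absorption rates and absorbing payoffs generated by the different combinations of ``large'' and ``vanishing'' actions; these three relations are perfectly compatible with $g(x,y)\neq v$ (take $\mu_{1,3}/\gamma_{1,3}<v<\mu_{3,1}/\gamma_{3,1}$). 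So the proposed argument stalls precisely where the difficulty lies.

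What the paper does instead is a contradiction argument with a finer, per-action analysis: each player's actions are classified by the order of magnitude of their weights (bounded below; $\to 0$ but $\gg\lambda$; $\asymp\lambda$; $o(\lambda)$) — note that intermediate orders such as $\sqrt\lambda$ genuinely occur for optimal strategies, which is why no expansion of the form $x+\lambda a x'$ is available, as you observed — and $\gamma$, $\mu$ are decomposed into the three surviving sources $\gamma_{1,3},\gamma_{2,2},\gamma_{3,1}$. If $g(x,y)<v<\overline g^*(x,y)$, Lemma \ref{lemelem} locates one source whose conditional absorbing payoff exceeds $v$, and a tailored reallocation of vanishing weight (Player 2 deleting his $J_3$ actions, or his $J_2$ actions, or Player 1 doubling his $I_3$ actions, the freed or added mass being moved to a main action) changes \emph{only} that source in the limit; Lemma \ref{lemelem} then shows the limit of $r_{\lambda_n}$ under this deviation is strictly below (resp. above) $v$, contradicting the opponent's $\varepsilon(\lambda_n)$-optimality. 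Finiteness is needed not for subsequential convergence of $g$, $p^*/\lambda$, $G^*/p^*$ (boundedness already gives that), but precisely for this per-action classification and for the explicit deviations. Some device of this kind — identifying which absorption source is out of line and constructing a deviation affecting only it — is what your proposal is missing.
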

 
 We will use in the proof of this proposition the following elementary lemma given without proof.
 
 \begin{lem}\label{lemelem}
 Let $a,b,c,d$ be real numbers with $c$ and $d$ positive. Then $min(\frac{a}{c},\frac{b}{d})\leq\frac{a+b}{c+d}\leq max(\frac{a}{c},\frac{b}{d})$ with equality if and only if $\frac{a}{c}= \frac{b}{d}$
 \end{lem}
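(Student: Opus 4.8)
The plan is to recognize the mediant $\frac{a+b}{c+d}$ as a convex combination of the two ratios $\frac{a}{c}$ and $\frac{b}{d}$, after which both inequalities follow from the elementary fact that a convex combination of two reals lies between them.

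First I would use $c,d>0$ to write
\[
\frac{a+b}{c+d}=\frac{c}{c+d}\cdot\frac{a}{c}+\frac{d}{c+d}\cdot\frac{b}{d}.
\]
Setting $\alpha=\frac{c}{c+d}$ and $\beta=\frac{d}{c+d}$, one has $\alpha,\beta>0$ and $\alpha+\beta=1$, so the right-hand side is a genuine convex combination of $u:=\frac{a}{c}$ and $v:=\frac{b}{d}$. Then I would observe that any such convex combination satisfies $\min(u,v)\leq\alpha u+\beta v\leq\max(u,v)$, since
\[
\alpha u+\beta v-\min(u,v)=\alpha\bigl(u-\min(u,v)\bigr)+\beta\bigl(v-\min(u,v)\bigr)\geq 0,
\]
both summands being nonnegative, and symmetrically for the upper bound. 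Substituting back $u=\frac{a}{c}$, $v=\frac{b}{d}$ yields the two stated inequalities.

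For the equality case the key point is that $\alpha$ and $\beta$ are \emph{strictly} positive, which is exactly guaranteed by the hypothesis $c,d>0$. If equality holds in the lower bound, then $\alpha\bigl(u-\min(u,v)\bigr)+\beta\bigl(v-\min(u,v)\bigr)=0$ forces both nonnegative terms to vanish, hence $u=v$, that is $\frac{a}{c}=\frac{b}{d}$; the converse is immediate since then $u=v$ equals every such mean. The upper bound is treated identically. There is essentially no obstacle in this argument: the only subtlety worth flagging is that strict positivity of the denominators $c$ and $d$ is precisely what makes the weights strictly positive and thereby produces the sharp equality condition (a purely algebraic cross-multiplication proof, comparing $\frac{a+b}{c+d}-\frac{a}{c}=\frac{bc-ad}{c(c+d)}$ with $\frac{b}{d}-\frac{a+b}{c+d}=\frac{bc-ad}{d(c+d)}$, would work equally well and makes the same dependence on the sign of $bc-ad$ explicit).
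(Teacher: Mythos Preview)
Your proof is correct. The paper states this lemma explicitly ``without proof'' as an elementary fact, so there is nothing to compare against; your convex-combination argument (and the alternative cross-multiplication you mention) are both standard and complete ways to supply the omitted details.
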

 
     \begin{proof}[Proof of Proposition \ref{Propfinite}]

The result is clear for $t=0$ or $1$, assume by contradiction that it is false for some $t\in ]0,1[$. Hence there is a sequence $\lambda_n$ going to 0 and optimal strategies $x_{\lambda_n}$ and $y_{\lambda_n}$ such that $l^{{x}_{\lambda_n},{y}_{\lambda_n}}_{\lambda_n}(t)$ converges to $tw$ with $w\neq v$. Up to extraction of subsequences, $x_{\lambda_n}$ and $y_{\lambda_n}$ converge to $x$  and $y$ respectively. Also up to extraction, all the following limits exist in $[0,+\infty]$: $\alpha(i)=\lim_{n\rightarrow\infty} \frac{x_{\lambda_n}(i)}{\lambda_n}$, $\beta(j)=\lim_{n\rightarrow\infty} \frac{y_{\lambda_n}(j)}{\lambda_n}$, and $\gamma=\lim_{n\rightarrow\infty} \frac{p^*(x_{\lambda_n},y_{\lambda_n})}{\lambda_n}$. If $\gamma\neq 0$ (and hence $p^*(x_{\lambda_n},y_{\lambda_n})>0$ for $n$ large enough), denote $\overline g^*(x,y):=\lim\frac{G^* (x_{\lambda_n},y_{\lambda_n})}{p^* (x_{\lambda_n},y_{\lambda_n})}$, which also exists up to extraction.
 
 Recall formula (\ref{rlambda}):
 \begin{equation} \label{eqvlambdan}
r_{\lambda_n}(x_{\lambda_n},y_{\lambda_n})= \frac{ \lambda_n g(x_{\lambda_n},y_{\lambda_n}) + (1- \lambda_n) G^* (x_{\lambda_n},y_{\lambda_n}) }{\lambda_n + (1- \lambda_n) p^* (x_{\lambda_n},y_{\lambda_n})}
 \end{equation} 
  and since  $x_\lambda$ and $y_\lambda$ are families of $\varepsilon(\lambda)$-optimal strategies in $\Gamma_\lambda$, $r_{\lambda_n}(x_{\lambda_n},y_{\lambda_n})$ converges to $v$ as $n$ tends to infinity.
  
  Recall that by Lemma \ref{lemQgamma}, at the limit
  
  \[
l(t)=\frac{1-(1-t)^{1+\gamma}}{1+\gamma} g(x,y)+\left(t-\frac{1-(1-t)^{1+\gamma}}{1+\gamma}\right) {\overline g}^*(x,y).   
  \]
  
  We first claim that $\gamma\in]0,+\infty[$.\\ 
 If $\gamma=0$,  $l(t)=tg(x,y)$, and by near optimality of $x_\lambda$ and $y_\lambda$ $v=l(1)=g(x,y)$, hence $l(t)=tv$ a contradiction.  \\
  If $\gamma=+\infty$, $l(t)=t \overline g^*(x,y)$, and by near optimality of $x_\lambda$ and $y_\lambda$ $v=l(1)=g^*(x,y)$, hence $l(t)=tv$ a contradiction.  
  
 Hence $\gamma\in]0,+\infty[$, 
   and $w$ is a nontrivial convex combination of $g(x,y)$ and $\overline g^*(x,y)$. Since $v=l(1)$ is also a convex combination of $g(x,y)$ and $\overline g^*(x,y)$, the assumption that $v\neq w$ implies $g(x,y) \neq\overline g^*(x,y)$. Assume without loss of generality  $g(x,y)<v< \overline g^*(x,y)$.
  
  We class the actions $i$ of Player 1 in 4 categories $I_1$ to $I_4$: 
  \begin{itemize}
  \item $i\in I_1$ if $x(i)>0$,
    \item $i\in I_2$ if $x(i)=0$ and $\alpha(i)=+\infty$,  
    \item $i\in I_3$ if $x(i)=0$ and $\alpha(i)\in]0,+\infty[$,
      \item $i\in I_4$ if $x(i)=0$ and $\alpha(i)=0$.
     \end{itemize}
     Hence actions of category 1 are of order 1, actions of category 3 are of order $\lambda$, actions of category 4 are of order $o(\lambda)$, and actions of category 2 are played with probability going to 0 but large with respect to  $\lambda$. Define categories $J_1$ to $J_4$ of player 2 in a similar way.
     By definition,
     \begin{eqnarray} \label{eqpetoile}
     \frac{p^*(x_{\lambda_n},y_{\lambda_n})}{\lambda_n}&=&\sum_{I\times J} \frac{p^*(i,j) x_{\lambda_n}(i) y_{\lambda_n}(j)}{\lambda_n}
     \end{eqnarray}     
    Recall that the left hand side converges to $\gamma  \in]0,+\infty[$, hence up to extraction $\frac{p^*(i,j)x_{\lambda_n}(i){y_{\lambda_n}}(j)}{\lambda_n}$ converge in $[0,+\infty[$ for any $i$ and $j$, denote by  $\delta_{ij}$ the limit. If $i$ and $j$ are of category $k$ and $l$ with $k+l>4$ then $\delta_{ij}=0$. If $k+l<4$ then $\frac{x_{\lambda_n}(i){y_{\lambda_n}}(j)}{\lambda_n}$ diverges to $+\infty$ which implies that $p^*(i,j)=0=\delta_{ij}$.

%

Hence going to the limit in (\ref{eqpetoile}) we get that $\gamma=\gamma_{1,3}+\gamma_{2,2}+\gamma_{3,1}$ where
  where $\gamma_{1,3}:= \sum_{I_1\times J_3}p^*(i,j) x(i)\beta(j)$,  $\gamma_{2,2}:= \sum_{I_2\times J_2} \delta_{ij}$, and $\gamma_{3,1}:= \sum_{I_3\times J_1}p^*(i,j)  \alpha(i)y(j)$.
Recall that $\gamma>0$ thus at least one of $\gamma_{1,3}$, $\gamma_{2,2}$ or $\gamma_{3,1}$ is positive as well. 
    
Similarly, passing to the limit in the definition of $G^*(x_{\lambda_n},y_{\lambda_n}):= \sum_{I\times J} G^*(i,j) x_{\lambda_n}(i) y_{\lambda_n}(j)$
  yields $\mu:=\lim \frac{G_{kl}^*(x_{\lambda_n},y_{\lambda_n}) }{\lambda_n}=\mu_{1,3}+\mu_{2,2}+\mu_{3,1}$  where $\mu_{1,3}:= \sum_{I_1\times J_3} x(i)\beta(j) G^*(i,j)$,  $\mu_{2,2}:= \sum_{I_2\times J_2} \delta_{ij}\overline g^*(i,j)$, and $\mu_{3,1}:= \sum_{I_3\times J_1} \alpha(i)y(j)G^*(i,j)$. Note that if $\gamma_{k,4-k}=0$ for some $k$ then $\mu_{k,4-k}=0$ as well.
  
  Finally, going to the limit in equation (\ref{eqvlambdan}) yields
  \[
  v=\frac{ g(x,y)+\mu_{1,3}+\mu_{2,2}+\mu_{3,1}}{1+\gamma_{1,3}+\gamma_{2,2}+\gamma_{3,1}}
  \]
  and similarly going to the limit in the definition of $\overline g^*(x,y):=\lim\frac{G^* (x_{\lambda_n},y_{\lambda_n})}{p^* (x_{\lambda_n},y_{\lambda_n})}$ yields
  
    \[
\overline g^*(x,y)=\frac{ \mu_{1,3}+\mu_{2,2}+\mu_{3,1}}{\gamma_{1,3}+\gamma_{2,2}+\gamma_{3,1}}
  \]
  
  Recall that we assumed $\overline g^*(x,y)>v$. By Lemma \ref{lemelem}, there exists $k$ such that $\gamma_{k,4-k}>0$ and $\frac{\mu_{k,4-k}}{\gamma_{k,4-k}}\geq \overline g^*(x,y)>v$.
  
  Assume first that $k=1$. Consider now the following strategy $y'_{\lambda_n}$ : $y'_{\lambda_n}(j)=0$ for $j\in J_3$, and $y'_{\lambda_n}(j)=y_{\lambda_n}(j)$ for all other $j$ except for an arbitrary $j_0\in J_1$ for which $y'_{\lambda_n}(j_0)=y_{\lambda_n}(j_0)+\sum_{j\in J_3} y_{\lambda_n}(j)$. The only effect of this deviation is that now $\gamma'_{1,3}=\mu'_{1,3}=0$. Hence
   \[
  \lim_{n\rightarrow\infty} r_{\lambda_n}(x_{\lambda_n},y'_{\lambda_n})= \frac{ g(x,y)+\mu_{2,2}+\mu_{3,1}}{1+\gamma_{2,2}+\gamma_{3,1}}
  \]
  which is strictly less than $v$ by Lemma  \ref{lemelem} since $v<\frac{\mu_{1,3}}{\gamma_{1,3}}$ ; this contradicts the $\epsilon(\lambda_n)$-optimality of $x_{\lambda_n}$.
  
  Assume next that $k=2$. Consider now the following strategy $y'_{\lambda_n}$ : $y'_{\lambda_n}(j)=0$ for $j\in J_2$, and $y'_{\lambda_n}(j)=y_{\lambda_n}(j)$ for all other $j$ except for an arbitrary $j_0\in J_1$ for which $y'_{\lambda_n}(j_0)=y_{\lambda_n}(j_0)+\sum_{j\in J_2} y_{\lambda_n}(j)$. The only effect of this deviation is that now $\gamma'_{2,2}=\mu'_{2,2}=0$. Hence
   \[
  \lim_{n\rightarrow\infty} r_{\lambda_n}(x_{\lambda_n},y'_{\lambda_n})= \frac{ g(x,y)+\mu_{1,3}+\mu_{3,1}}{1+\gamma_{2,2}+\gamma_{3,1}}
  \]
  which is strictly less than $v$ by Lemma  \ref{lemelem} since $v<\frac{\mu_{2,2}}{\gamma_{2,2}}$ ; this again contradicts the  $\epsilon(\lambda_n)$-optimality of $x_{\lambda_n}$.
  
  Finally assume that $k=3$. Consider now the following strategy $x'_{\lambda_n}$ : $x_{\lambda_n}(i)=2x_{\lambda_n}(i)$  for $i\in I_3$ and $x'_{\lambda_n}(i)=x_{\lambda_n}(i)$ for all other $i$ except  for an arbitrary $i_0\in I_1$ for which $x'_{\lambda_n}(i_0)=x_{\lambda_n}(i_0)-\sum_{j\in J_3} x_{\lambda_n}(j)$ (which is nonnegative for $n$ large enough). The only effect of this deviation is that now $\gamma'_{3,1}=2\gamma_{3,1}$ and $\mu'_{3,1}=2\mu_{3,1}$. Hence 
  
   \[
  \lim_{n\rightarrow\infty} r_{\lambda_n}(x'_{\lambda_n},y_{\lambda_n})=\frac{ g(x,y)+\mu_{1,3}+\mu_{2,2}+2\mu_{3,1}}{1+\gamma_{1,3}+\gamma_{2,2}+2\gamma_{3,1}}  \]
    which is strictly more than $v$ by Lemma  \ref{lemelem} since $v<\frac{\mu_{3,1}}{\gamma_{3,1}}$; this contradicts the  $\varepsilon(\lambda_n)$-optimality of $y_{\lambda_n}$.
 \end{proof}

\section{Stochastic   finite games}

\subsection{Non algebraic limit trajectories}$ $ \\
Consider the following zero-sum stochastic game with two non
absorbing states and two actions for each player. In the first state
$s_1$ (which is the starting state) the payoff and transitions are
as follows:
$$
\begin{tabular}{rcc}
&L&R\\  \cline{2-3} U&\multicolumn{1}{|c|}{1*}&\multicolumn{1}{|c|}{0+}\\
\cline{2-3}
D&\multicolumn{1}{|c|}{0}&\multicolumn{1}{|c|}{1}\\
\cline{2-3}
\end{tabular}
$$

\noindent where $*$ denotes absorption and $+$ that there is a
deterministic transition to state 2. Starting from the second state $s_2$ the
game is a linear variation of the Big Match:
$$
\begin{tabular}{rcc}
&L&R\\  \cline{2-3} U&\multicolumn{1}{|c|}{1*}&\multicolumn{1}{|c|}{-1*}\\
\cline{2-3}
D&\multicolumn{1}{|c|}{-1}&\multicolumn{1}{|c|}{1}\\
\cline{2-3}
\end{tabular}
$$
Since $v_\lambda(s_2)=0$ for all $\lambda$ and since there is no
return once the play has entered state $s_2$, it implies that
the optimal play in state $s_1$ is the same than in the Big Match, in which $\gamma=1$. So in
both states the optimal strategies in $\Gamma_\lambda$ are
$D+\lambda U$ for Player 1 and $1/2L+1/2R$ for Player 2. By a
scaling of time, the preceding section tells us that at the limit
game, the probability of being in state 2 at time $t$, given that
there were transition from $s_1$ to $s_2$ at time $z$, is
$\frac{1-t}{1-z}$. Since (also from the preceding section) the time
of transition from $s_1$ to another state (which is $s_2$ with probability $1/2$) has a uniform law on $[0,1]$, the
probability of being in $s_2$ at time $t$ is
\[
p(t)=\frac{1}{2}\int_0^t \frac{1-t}{1-z} dz=-\frac{(1-t)\ln(1-t)}{2}.
\]
Notice that this not an algebraic function of $t$ as it was always
the case in the preceding section. Similarly the probability of
absorption before time $t$ is $1-(1-t)(1-\frac{\ln(1-t)}{2})$ and is also non
algebraic.

\subsection{No $\varepsilon-$optimal strategies of the form $x+a\lambda x'$ } $ $\\
 In the following game with two non absorbing states the payoff is always 1 in state $a$ and  -1 in state $b$ with the following deterministic transitions
$$ 
\mbox {   state $a$}
\qquad  \begin{tabular}{rcc}
&L&R\\  \cline{2-3}
U&\multicolumn{1}{|c|}{a}&\multicolumn{1}{|c|}{b}\\
\cline{2-3}
D&\multicolumn{1}{|c|}{b}&\multicolumn{1}{|c|}{1*}\\
\cline{2-3}
\end{tabular}
$$

$$ \mbox {   state $b$}\qquad  \begin{tabular}{rcc}
&L&R\\  \cline{2-3} U&\multicolumn{1}{|c|}{b}&\multicolumn{1}{|c|}{a}\\
\cline{2-3}
D&\multicolumn{1}{|c|}{a}&\multicolumn{1}{|c|}{-1*}\\
\cline{2-3}
\end{tabular}
$$
It is easy to see that the asymptotic value is 0 and that optimal strategies in the  $\lambda$-discounted game   put a weight  $ \sim\sqrt \lambda$ on $D$ and $R$ in both states, hence the absorbing probability is of the order of $\lambda $ per stage  in each state.\\
We show that strategies of the form $(x_a +  C_a \lambda x'_a$, $x_b +  C_b \lambda x'_b)$
cannot guarantee more than -1 to player 1, as $\lambda $ goes to $0$.\\
- If $x_a(D) x_b (D) >0$, player 2 plays $L$ in $a$ and $R$ in $b$ inducing an absorbing payoff of -1.\\
From $a$ we reach eventually $b$ were $-1$ has a positive probability.\\ 
- If $x_a(D) = 0, x_b (D) >0$, player plays $R$ in both games  inducing an absorbing payoff of -1.\\
The payoff will be absorbing with high probability in finite time and the   relative probability of $1^*$ vanishes with $\lambda$.\\ 
- If $x_a(D) >0 , x_b (D) =0$, player plays $L$ in both games  inducing a non  absorbing payoff of -1.\\
For $\lambda$ small, most of the time the state is $b$.\\ 
- If $x_a(D) =0 , x_b (D) =0$, player plays $R$ in  game $a$ and $L$ in game $b$.
The event ``absorbing payoff of 1" occurs at stage $n$ if $\omega_n = a$ and $i_n= D$. Hence $\omega_{n-1} = b$ and $i_{n-1} = D$. Now this event ``$i_n= D$ and  $i_{n-1} = D$" has probability  of order $\lambda ^2$. Then the absorbing  component  of  the $\lambda$ discounted payoffs  converges to 0 with $\lambda$.  Moreover the  non absorbing payoff is  mainly $-1$.

\section{An absorbing  game with compact action sets and non linear LOTP}
%
%
%

We consider the following absorbing game with compact actions sets. There are three states, two absorbing $0^*$ and $-1^*$ , and the non absorbing state $\omega$, in which the payoff is 1 whatever the actions taken. The sets of action are $X=Y=\{0\}\cup\{1/n,\ n\in\mathbf{N}^*\}$ with the usual distance. The probabilities of absorption are given by :
\[
\rho(0^*|x,y)=\begin{cases}
0 & \text{if $x=y$}\\
\sqrt{y} & \text{if $x\neq y$}
\end{cases}
\]
and
\[
\rho(-1^*|x,y)=\begin{cases}
y & \text{if $x=y$}\\
0 & \text{if $x\neq y$}
\end{cases}
\]
It is easily checked that both functions $\rho(0^*|\cdot \cdot )$ and $\rho(-1^*|\cdot \cdot )$ are (jointly) continuous.

\begin{proposition}
For any discount factor $\lambda\in ]0,1]$, 0 (resp. 1) is optimal for Player 1 (resp. Player 2) in the $\lambda$-discounted game, and $v_\lambda=\lambda$. \\
The corresponding  payoff trajectory is: $ l(t) = 0$ on $[0,1]$.
\end{proposition}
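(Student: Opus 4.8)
The plan is to verify directly that the claimed stationary strategies are optimal by computing the payoff they guarantee, using formula \eqref{rlambda} for the payoff $r_\lambda(x,y)$ induced by stationary strategies in an absorbing game. First I would compute the payoff when Player 1 plays $x=0$. For any $y\in Y$: if $y=0$, then $\rho(0^*|0,0)=0$ and $\rho(-1^*|0,0)=0$, so there is no absorption, the payoff is the constant nonabsorbing payoff $1$ weighted only by... wait, more carefully: with $x=0,y=0$ we have $p^*=0$, so $r_\lambda(0,0)=\frac{\lambda g}{\lambda}=g=1$. If $y\neq 0$ (so $y=1/n$ for some $n$), then since $x=0\neq y$ we are in the case $x\neq y$: $\rho(0^*|0,y)=\sqrt y$ and $\rho(-1^*|0,y)=0$, so $p^*(0,y)=\sqrt y$ and the absorbing payoff is $0$, hence $G^*(0,y)=0$ and $r_\lambda(0,y)=\frac{\lambda\cdot 1}{\lambda+(1-\lambda)\sqrt y}\le 1$. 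Thus $x=0$ guarantees $r_\lambda(0,y)>0$ for all $y$, and in particular the infimum over $y$ is attained in the limit $y\to 0$ (i.e., $n\to\infty$), giving $\inf_y r_\lambda(0,y)=\lambda$ (the value as $n\to\infty$ of $\frac{\lambda}{\lambda+(1-\lambda)/\sqrt n}$, which decreases to $\lambda$ but is never equal to it — so strictly $\inf=\lambda$, not attained). Hence $v_\lambda\ge\lambda$.

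Next I would check that Player 2 playing $y=1$ guarantees at most $\lambda$. With $y=1$: if $x=1$, then $x=y$, so $\rho(0^*|1,1)=0$, $\rho(-1^*|1,1)=1$, giving $p^*=1$, absorbing payoff $-1$, so $r_\lambda(1,1)=\frac{\lambda\cdot 1+(1-\lambda)(-1)}{\lambda+(1-\lambda)}=2\lambda-1$. If $x\neq 1$ (so $x=0$ or $x=1/n$, $n\ge 2$), then $x\neq y$, so $\rho(0^*|x,1)=\sqrt 1=1$, $\rho(-1^*|x,1)=0$, giving $p^*=1$, absorbing payoff $0$, hence $r_\lambda(x,1)=\frac{\lambda\cdot 1+0}{\lambda+(1-\lambda)}=\lambda$. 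So against $y=1$, Player 1 gets $\max\{2\lambda-1,\ \lambda\}=\lambda$ for $\lambda$ small (indeed for all $\lambda\le 1$ since $2\lambda-1\le\lambda\iff\lambda\le 1$). Thus $v_\lambda\le\lambda$. Combining, $v_\lambda=\lambda$ and the indicated strategies are optimal.

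Finally, for the trajectory: since Player 1's optimal strategy $x=0$ and Player 2's optimal strategy $y=1$ give $x\neq y$, absorption happens with probability $1$ at stage $1$ into the absorbing state $0^*$, whose payoff is $0$. Concretely, with $\hat x_\lambda=0$ and $\hat y_\lambda=1$ we have $p^*(0,1)=1$, so $\frac{p^*(\hat x_\lambda,\hat y_\lambda)}{\lambda}\to+\infty$, and Lemma \ref{lemQgamma} gives $Q^{\hat x_\lambda,\hat y_\lambda}_\lambda(t)\to 0$ uniformly (this is the $\gamma=+\infty$ case). Then by \eqref{eqlQabsorb}, $l^{\hat x_\lambda,\hat y_\lambda}_\lambda(t)=Q_\lambda(t)\,g(\hat x_\lambda,\hat y_\lambda)+(t-Q_\lambda(t))\,\overline g^*(\hat x_\lambda,\hat y_\lambda)$, and since $g$ is bounded by $1$ while $\overline g^*(0,1)=0$ (absorption is to $0^*$), we get $|l^{\hat x_\lambda,\hat y_\lambda}_\lambda(t)|\le Q_\lambda(t)\to 0$ uniformly in $t$. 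Hence $l(t)=0$ on $[0,1]$ is the LOTP. I expect the only mild subtlety to be bookkeeping around the fact that the infimum defining $v_\lambda$ from Player 1's side is not attained (the optimal-looking strategy $x=0$ guarantees strictly more than $\lambda$ against every fixed $y$, yet $v_\lambda=\lambda$ exactly because Player 2 has the genuinely optimal $y=1$); care is needed to state "optimal" correctly, but since the value equals $\lambda$ and $y=1$ attains the $\max_x$ at $\lambda$ while $x=0$ attains $\inf_y$ in the limit, both are optimal in the usual sense for the discounted game, and this is what the proposition asserts.
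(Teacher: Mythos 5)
Your computations via \eqref{rlambda} do establish the proposition, but you take a different route from the paper, and one step of your discussion is wrong even though it turns out not to be load-bearing. The paper argues pathwise and avoids all computation: with $x=0$ absorption to $-1^*$ is impossible, so every stage payoff from stage $2$ on is nonnegative; with $y=1$ absorption occurs with probability one at the end of stage $1$, so every stage payoff from stage $2$ on is nonpositive; since the stage-$1$ payoff is $1$ in all cases, both bounds $v_\lambda\geq\lambda$ and $v_\lambda\leq\lambda$ hold against \emph{arbitrary} (not necessarily stationary) strategies of the opponent. Your verification via $r_\lambda$ only tests stationary replies; that does suffice, because the opponent of a fixed stationary strategy faces a discounted MDP whose infimum is attained over stationary strategies (and because $G^*(0,\cdot)\equiv 0$ and $p^*(\cdot,1)\equiv 1$ make the extension to mixed stationary strategies immediate), but this standard reduction should be stated---the paper's pathwise argument buys you exactly that for free. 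The genuine slip is in your analysis of Player 1's guarantee: for $y=1/n$ one has $r_\lambda(0,y)=\lambda/(\lambda+(1-\lambda)\sqrt{y})$, which is \emph{decreasing} in $\sqrt{y}$, hence minimized at $y=1$ where it equals exactly $\lambda$; as $n\to\infty$ it \emph{increases} to $1$, it does not decrease to $\lambda$. So $\inf_y r_\lambda(0,y)=\lambda$ is attained (at $y=1$), and your closing remark that $x=0$ ``guarantees strictly more than $\lambda$ against every fixed $y$'' and that the infimum is not attained is false; fortunately the only inequality you actually need, $r_\lambda(0,y)\geq\lambda$ for all $y$, follows from your own formula since $\sqrt{y}\leq 1$, so the conclusion $v_\lambda=\lambda$ with $0$ and $1$ optimal stands. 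The trajectory part is fine: invoking Lemma \ref{lemQgamma} with $p^*(0,1)=1$ (so $\gamma=+\infty$, $Q_\lambda\to 0$ uniformly) and $\overline g^*(0,1)=0$ gives $l_\lambda(t)=Q_\lambda(t)\to 0$, which is a slightly more formal version of the paper's one-line observation that under these strategies there is immediate absorption to $0^*$.
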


\begin{proof}
Action 0 of Player 1 ensures that there will never be absorption to state $-1^*$, and thus that the  stage payoff from stage 2 on is nonnegative. Action 1 of Player 2 ensures that there will be absorption with probability 1 at the end of stage 1, and thus that the  stage payoff from stage 2 on is nonpositive. Since the payoff in stage 1 is 1 irrespective of player's actions, the proposition is established.

Notice that the play  under this couple of optimal strategies is simple: there is immediate absorption to $0^*$, and in particular the limit  payoff  trajectory is linear and equals 0 for every time $t$.
\end{proof}

We now prove that there are other $\varepsilon$-optimal strategies, with a different limit payoff trajectory. Denote $\{\lambda\}:=\frac{1}{[1/\lambda]}$ where $[\cdot]$ is the integer part ; hence $\lambda\leq \{\lambda\} < \frac{\lambda}{1-\lambda}$ and $1/\{\lambda\} \in \mathbb{N}^*$ for all $\lambda\in ]0,1]$.

\begin{proposition}
For any discount factor $\lambda\in ]0,1]$, $\{\lambda\}$ is $\lambda$-optimal for Player 1 and $\sqrt{\lambda}$-optimal for Player 2 in the $\lambda$-discounted game.\\
The corresponding payoff trajectory is: $l(t) = t-t^2$.
\end{proposition}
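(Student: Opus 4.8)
The plan is to analyze the pure stationary strategy "play the constant action $\{\lambda\}$" for each player, compute the induced one-shot absorption structure, and then invoke Lemma \ref{lemQgamma} to identify the occupation-measure trajectory, from which the payoff trajectory follows.

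First I would establish the two optimality claims. If Player $1$ plays the constant action $x=\{\lambda\}$, then for any $y$: if $y=\{\lambda\}$ we get absorption to $-1^*$ with probability $y=\{\lambda\}$, otherwise absorption to $0^*$ with probability $\sqrt y$; in either case there is no absorption to $-1^*$ unless the opponent matches, and when he matches the absorbing payoff is $-1$ but this is hit with probability only $\{\lambda\}\sim\lambda$ per stage. Plugging into formula \eqref{rlambda} (with $g\equiv 1$), a short computation shows $r_\lambda(\{\lambda\},y)\geq \lambda - O(\lambda)$ uniformly, and more precisely that it is at least $v_\lambda-\lambda$ with $v_\lambda=\lambda$ from the previous Proposition; so $\{\lambda\}$ is $\lambda$-optimal for Player $1$. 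Symmetrically, if Player $2$ plays $y=\{\lambda\}$ then for $x\neq\{\lambda\}$ the absorbing payoff is $0$ hit with probability $\sqrt{\{\lambda\}}\sim\sqrt\lambda$ per stage, while for $x=\{\lambda\}$ we absorb to $-1^*$; using \eqref{rlambda} one bounds $r_\lambda(x,\{\lambda\})\leq \lambda + O(\sqrt\lambda) \leq v_\lambda+\sqrt\lambda$ uniformly in $x$, giving $\sqrt\lambda$-optimality for Player $2$. These are the routine estimates I would not grind through in detail.

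Next, with both players using the stationary pair $(x_\lambda,y_\lambda)=(\{\lambda\},\{\lambda\})$, the relevant quantity is $p^*(x_\lambda,y_\lambda)=\rho(0^*|\{\lambda\},\{\lambda\})+\rho(-1^*|\{\lambda\},\{\lambda\})=0+\{\lambda\}=\{\lambda\}$, so $p^*(x_\lambda,y_\lambda)/\lambda \to 1$ as $\lambda\to 0$, i.e. $\gamma=1$. Lemma \ref{lemQgamma} then gives that $Q^{x_\lambda,y_\lambda}_\lambda(t)$ converges uniformly to $\frac{1-(1-t)^2}{2}=t-\frac{t^2}{2}$. Since absorption under this pair is always to $-1^*$ (the players always match), the absorbing payoff $\bar g^*(x_\lambda,y_\lambda)$ equals $-1$, while the non-absorbing stage payoff $g\equiv 1$. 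Using the decomposition $l^{x_\lambda,y_\lambda}_\lambda(t)=Q^{x_\lambda,y_\lambda}_\lambda(t)\,g(x_\lambda,y_\lambda)+\bigl(t-Q^{x_\lambda,y_\lambda}_\lambda(t)\bigr)\bar g^*(x_\lambda,y_\lambda)$ (as in \eqref{eqlQabsorb}), we obtain at the limit
\[
l(t)=\Bigl(t-\tfrac{t^2}{2}\Bigr)\cdot 1 + \Bigl(t-\bigl(t-\tfrac{t^2}{2}\bigr)\Bigr)\cdot(-1)=\Bigl(t-\tfrac{t^2}{2}\Bigr)-\tfrac{t^2}{2}=t-t^2,
\]
which is the claimed trajectory.

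The main obstacle, and the only genuinely delicate point, is making the optimality estimates uniform over the opponent's (infinite, compact) action set and checking that the error terms really are $O(\lambda)$ and $O(\sqrt\lambda)$ respectively rather than something larger — in particular handling the mismatch case $x\neq y$ carefully, where the per-stage absorption probability is $\sqrt y$ which can be as large as $1$ (when $y$ is bounded away from $0$), so one must verify that such $y$ are in fact bad for Player $2$ and do not threaten Player $1$'s guarantee. This amounts to observing that when $y$ is of order $1$, absorption to $0^*$ happens almost immediately and the discounted payoff is essentially $g=1>v_\lambda$, which only helps Player $1$; the binding case for Player $1$'s guarantee is $y=\{\lambda\}$, handled by the explicit formula above. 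Once this uniformity is in hand, the rest is the direct application of Lemma \ref{lemQgamma} and the payoff decomposition already set up in the absorbing-games section.
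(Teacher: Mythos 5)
Your proposal is correct and follows essentially the same route as the paper: the same case-by-case evaluation of formula \eqref{rlambda} against matching and non-matching actions (using $\lambda\leq\{\lambda\}<\frac{\lambda}{1-\lambda}$ and $v_\lambda=\lambda$) to get the $\lambda$- and $\sqrt{\lambda}$-optimality bounds, then $\gamma=1$, Lemma \ref{lemQgamma} and the decomposition $l(t)=Q(t)\cdot 1+\left(t-Q(t)\right)\cdot(-1)$ to obtain $l(t)=t-t^2$. Only one side remark is imprecise: against $y$ of order $1$ the payoff $\frac{\lambda}{\lambda+(1-\lambda)\sqrt{y}}$ is of order $\lambda$, not ``essentially $1$'', but since it is still nonnegative, hence at least $v_\lambda-\lambda$, this does not affect the argument.
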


\begin{proof}
 If both players play $\{\lambda\}$, the payoff in the  $\lambda$-discounted game is, according to formula (\ref{rlambda}),
\begin{eqnarray*}
r_{\lambda} (\{\lambda\},\{\lambda\}) &=& \frac{ \lambda  - (1- \lambda)\{\lambda\} }{\lambda + (1  - \lambda) \{\lambda\}}
\end{eqnarray*}
which is nonnegative since  $\{\lambda\} < \frac{\lambda}{1-\lambda}$. On the other hand, since  $\lambda\leq \{\lambda\} $, one gets $r_{\lambda} (\{\lambda\},\{\lambda\})\leq \lambda $. 

If Player 1 plays $\{\lambda\}$ while Player 2 plays $y\neq\{\lambda\}$, there is no absorption to $-1^*$ hence $r_{\lambda}(\{\lambda\},y)\geq 0$.\\
Thus $\{\lambda\}$ is $\lambda$-optimal for Player 1.

If If Player 2 plays $\{\lambda\}$ while Player 1 plays $x\neq\{\lambda\}$, then, according once again to formula (\ref{rlambda}),
\begin{eqnarray*}
r_{\lambda} (x,\{\lambda\}) &=& \frac{ \lambda  }{\lambda + (1  - \lambda) \sqrt{\{\lambda\}}}\\
&\leq& \frac{ \lambda  }{\lambda + (1  - \lambda) \sqrt{\lambda}}\\
&\leq& \sqrt{\lambda}.
\end{eqnarray*}
Thus  $\{\lambda\}$ is $\sqrt{\lambda}$-optimal for Player 2.

Notice that while the limit value is 0 and $(\{\lambda\}, \{\lambda\})$ is a couple of near optimal strategies, along the induced  play the nonabsorbing payoff is 1 and the absorbing payoff is -1. One can compute that the associated $\gamma$ is 1, hence under these strategies $Q(t)=t-\frac{t^2}{2}$. So that  the accumulated limit payoff up to time $t$ is $t-t^2$, which is non linear and positive for every $t\in ]0,1[$.
\end{proof}

Basically the players use  a jointly controlled  procedure either to follow $(\{\lambda\}, \{\lambda\})$ or to get at most (resp. at least) 0. 
\section{Concluding comments}

 A first serie of interesting open questions  is directly related to the results presented here like: \\
extension  of Proposition  12  to general (not stationary) strategies, \\
or more generally  analysis  in the framework of arbitrary   (not discounted) evaluations  and general stochastic games.

%
%
%
It is also natural to consider  other families of repeated games: a first class that is of interest  is games with incomplete information. The natural equivalent of $LOTM$ is in this framework is  the speed at which the information is transmitted  during the game.

\end{document}